\algnewcommand{\Inputs}[1]{%
	\State \textbf{Inputs:}
	\Statex \hspace*{\algorithmicindent}\parbox[t]{.8\linewidth}{\raggedright #1}
}
\algnewcommand{\Initialize}[1]{%
	\State \textbf{Initialize:}
	\Statex \hspace*{\algorithmicindent}\parbox[t]{.8\linewidth}{\raggedright #1}
}
\algnewcommand{\Outputs}[1]{%
	\State \textbf{Outputs:}
	\Statex \hspace*{\algorithmicindent}\parbox[t]{.8\linewidth}{\raggedright #1}
}
\tikzset{
	%Define standard arrow tip
	>=stealth',
	%Define style for boxes
	block/.style={
		rectangle,
		rounded corners,
		draw=black, very thick,
		text width=12em,
		minimum height=3em,
		text centered},
	% Define arrow style
	link/.style={
		->,
		thick,
		shorten <=2pt,
		shorten >=2pt},
	% Define decision style       
	decision/.style={
		diamond,
		draw, very thick,
		fill=blue!20, 
		text width=8em,
		aspect=3,
		text centered}
}
\theoremstyle{plain}
\newtheorem{assumption}{Assumption}
\newtheorem{theorem}{Theorem}[section]
\newtheorem{proposition}[theorem]{Proposition}
\newtheorem{lemma}[theorem]{Lemma}
\newtheorem{remark}[theorem]{Remark}
\newcommand{\longthmtitle}[1]{\mbox{}{\bf \textit{(#1).}}}
\newcommand{\oprocendsymbol}{\hbox{$\square$}}
\newcommand{\oprocend}{\relax\ifmmode\else\unskip\hfill\fi\oprocendsymbol}
\newcommand{\pushright}[1]{\ifmeasuring@#1\else\omit\hfill$\displaystyle#1$\fi\ignorespaces}
\newcommand{\pushleft}[1]{\ifmeasuring@#1\else\omit$\displaystyle#1$\hfill\fi\ignorespaces}
\renewcommand*\env@matrix[1][*\c@MaxMatrixCols c]{%
	\hskip -\arraycolsep
	\let\@ifnextchar\new@ifnextchar
	\array{#1}}
\DeclareMathOperator{\tr}{\bf Tr}
\DeclareMathOperator{\W}{\mathcal{W}}
\DeclareMathOperator{\N}{\mathcal{N}}
\DeclareMathOperator{\Pint}{\mathbb{N}}
\DeclareMathOperator{\sink}{\mathbb{K}}
\DeclareMathOperator{\R}{\mathbb{R}}
\DeclareMathOperator{\E}{\mathcal{E}}
\DeclareMathOperator{\X}{\mathcal{X}}
\DeclareMathOperator{\I}{\mathcal{I}}
\DeclareMathOperator{\St}{\mathcal{S}}
\DeclareMathOperator{\graph}{\mathcal{G}}
\newcommand{\Hc}{\ensuremath{\mathcal{H}}}
\newcommand{\Nc}{\ensuremath{\mathcal{N}}}
\newcommand{\argmin}{\operatorname{argmin}}
\newcommand{\rank}{\operatorname{rank}}
\newcommand{\real}{\ensuremath{\mathbb{R}}}
\newcommand{\complex}{\ensuremath{\mathbb{C}}}
\renewcommand{\bar}{\overline}
\newcommand{\setdef}[2]{\{#1 \; | \; #2\}}
\newcommand{\map}[3]{#1:#2 \rightarrow #3}
\title{Scheduled-Asynchronous Distributed Algorithm
  \\
  for Optimal Power Flow}
\author{Chin-Yao Chang \quad Jorge Cort\'es \quad Sonia Mart{\'\i}nez
  \thanks{C.-Y. Chang, Jorge Cort\'es, and Sonia Mart{\'\i}nez are
    with the Department of Mechanical and Aerospace Engineering,
    University of California, San Diego, CA, USA. Email: {\footnotesize {\tt
        \{chc433,cortes,soniamd\}@ucsd.edu}}} \thanks{A preliminary
    version of this work appeared as~\cite{CYC-JC-SM:17-acc} at the
    2017 American Control Conference.}}
\begin{document}
\maketitle

\begin{abstract}
  Optimal power flow (OPF) problems are non-convex and large-scale
  optimization problems with important applications in power networks.
  This paper proposes the scheduled-asynchronous algorithm to solve a
  distributed semidefinite programming (SDP) formulation of the OPF
  problem. In this formulation, every agent seeks to solve a local
  optimization with its own cost function, physical constraints on its
  nodal power injection, voltage, and power flow of the lines it is
  connected to, and decision constraints on variables shared with
  neighbors to ensure consistency of the obtained solution.  In the
  scheduled-asynchronous algorithm, every pair of connected nodes in
  the electrical network update their local variables in an
  alternating fashion. This strategy is asynchronous, in the sense
  that no clock synchronization is required, and relies on an
  orientation of the electrical network that prescribes the precise
  ordering of node updates.  We establish the asymptotic convergence
  properties to the primal-dual optimizer when the orientation is
  acyclic.  Given the dependence of the convergence rate on the
  network orientation, we also develop a distributed graph coloring
  algorithm that finds an orientation with diameter at most five for
  electrical networks with geometric degree distribution.
  Simulations illustrate our results on various IEEE bus test cases.
\end{abstract}

\section{Introduction}\label{sec: Intro}
% Finding an optimal solution for the OPF problem in a distributed way
% is non-trivial but important in various applications. Recent studies
% show that a wide class of OPF problem has an exact semidefinite
% programming (SDP) convex relaxation. Solving the SDP associated with
% the OPF problem in a distributed way is promising, while only few
% works have considered such extension.
The optimal power flow (OPF) problem seeks to minimize the cost of
electricity generation subject to voltage and power flow
constraints. Finding a solution to the OPF problem is challenging due
to its large-scale and non-convex nature and, therefore, it is
typically solved off-line for centralized planning of power
networks. However, recent technological advances involving the
integration of renewable distributed energy resources introduce higher
operational uncertainty in managing the electrical grid, motivating
the need for real-time methods to solve OPF
problems. %~\cite{AL-BZ-ND:12}
Ideally, such methods should enjoy robustness against disturbances and
tolerance against intermittent engagement of energy resources. An
additional consideration further justifying the need for such methods
is the increase of plug-and-play devices in distribution networks and
the ensuing uncertain overall system configuration. Motivated by these
considerations, this paper introduces a provably-correct distributed
algorithm to solve a convexified OPF problem over a power network.

\textit{Literature review:} Finding a global optimum of the OPF
problem is challenging due to its non-convexity, so most existing
algorithms only guarantee a local optimum, see
e.g.,~\cite{JM-ME-RA:99,VA-CC:92,MA:02}. A commonly used approach in
the literature~\cite{AJC-JAA:98,PNB-AGB-NIM-NKP:05} to convexify the
problem is the use of DC power flow equations. An alternative route
for the convexification of the OPF problem is employing semi-definite
programming (SDP).  The work~\cite{JL-SL:12} shows that the SDP convex
relaxation on the OPF problem is exact for many networks, a fact that
allows to find a global solution. Conditions on the exact convex
relaxation have been further established
in~\cite{BZ:13,DKM:13}. Several recent
works~\cite{DKM-CJ-IH-PP:17,RM-MA-JL:14} further develop SDP-based
algorithms for a near-global optimal solution for OPF problems where
the SDP convex relaxation does not provide a feasible solution.
Relatively few works consider solving the OPF problem in a distributed
way.  The paper~\cite{PB-AB-NM-NP:05} proposes a distributed approach
to the DC-OPF problem, where the electrical network is decomposed into
several regions and each region solves its regional DC-OPF problem
while iteratively matching its tie-line powers with the connected
regions.  The work~\cite{AL-BZ-ND:12} considers an SDP-based
convexification of the OPF problem and then proposes gradient-based
primal-dual algorithm which displays fast convergence to the optimum
in simulation studies.  The algorithm design is limited to voltage
magnitude constraints and linear objective functions and does not
incorporate constraints on the active/reactive power. The
work~\cite{ED-HZ-GBG:13} also considers a SDP-based relaxation of the
OPF problem, partitions a so-called weakly-meshed network into several
areas so that the ``macro'' graph describing the interconnected areas
has a tree topology, and applies the alternating direction method of
multipliers (ADMM) to solve the distributed optimization associated
with the macro graph.  In general, gradient-based methods are amenable
to asynchronous implementations, but have a rate of convergence
$O(\log(n)/n)$ or slower~\cite{DJ-JX-JMM:14}. In comparison, ADMM is
faster with a $O(1/n)$ convergence rate~\cite{BH-XY:15}, while
requiring a synchronous
implementation.  % This motivates the design of distributed
% algorithms with fast convergence properties that at the same and
% asynchronous property is desired.
Drawing connections with the increasing body of work on gossiping in
network systems~\cite{SL-AN:16,LM-AS-FT:13,DS:09}, our algorithm
design prescribes pairwise updates between neighboring agents to avoid
the requirement of clock synchronization while enjoying similar
convergence properties as ADMM.
			
\textit{Statement of contributions:} Our starting point for the
algorithm design is the formulation of a distributed version of the
OPF problem, where each bus (node) plays the role of an agent with
computation and communication capabilities. Every node creates local
copies of the voltage of the neighboring nodes which are physically
connected to it. Each node poses an optimization with its local cost
function, constraints of its nodal power injection and voltage, and
constraints associated with the power flow of the connected lines. On
top of those local physical constraints, every node also has equality
constraints sharing to its neighbors to ensure that every copy of the
variables coincides.  We then consider the convexification of this
distributed OPF formulation using semi-definite programming (SDP).
Our first contribution is the synthesis of the scheduled-asynchronous
algorithm to solve the distributed OPF formulation in a distributed
way.  In our design, every pair of connected buses solves their local
optimization problem in an ADMM-like, alternating fashion.  To
establish the order of such alternating iterations across the network,
every bus only updates its variables when all the neighboring buses
have finished an update later than its last update. This logic does
not require any clock synchronization between the agents, in contrast
to ADMM.  Our second contribution is the convergence analysis of the
proposed scheduled-asynchronous algorithm. % The algorithm is
% robust to mild network effect of packet drop.  We demonstrate the
% robustness through both analytical analysis and simulations.
Under reasonable assumptions on the data defining the optimization
problem and the requirement than the graph orientation is acyclic (to
avoid ``locked'' situations where every bus is waiting for an update
from at least one of its neighbors), we employ the LaSalle Invariance
Principle to establish the convergence to the primal-dual solution.
% If the network is bipartite, the algorithm convergence rate is
% $O(1/n)$. One important special case of the bipartite graph is tree
% network topology. Considering that tree network is commonly seen in
% the distribution networks, the proposed algorithm is especially
% suitable for the OPF problem in distribution networks.
Given the dependence of the algorithm convergence on the orientation
of the network graph, our third contribution concerns the optimal
selection of this orientation. In fact, the time need for all agents
to finish at least one update is directly related to the diameter of
the oriented network graph, and hence it is desirable to minimize it.
In general, finding an orientation that minimizes the diameter is
equivalent to finding the chromatic number of the graph, which is
NP-hard. We design a distributed graph coloring algorithm that finds
an orientation with diameter at most five for electrical networks with
geometric degree distribution. The distributed nature of this strategy
makes it naturally robust to changes in the network
topology. % We can implement the graph
% coloring algorithm with the scheduled-asynchronous algorithm in a
% separate time scale because the network topology changes slowly
% compared to other uncertainties of the OPF problem such as the change
% of loads.
Simulations on various IEEE bus test cases of the combined graph
coloring and scheduled-asynchronous algorithms illustrate the
performance and robustness of the proposed design.
		
\textit{Organization:} Section~\ref{sec:preliminaries} presents basic
concepts and notation.  Section~\ref{sec: ProbSet} introduces the OPF
problem and its distributed formulation. Section~\ref{sec:DisAlgo}
presents the scheduled-asynchronous algorithm and analyzes its
convergence properties. Section~\ref{sec:Direct_graph} introduces a
distributed graph coloring algorithm to obtain an acyclic orientation
of small diameter.  Section~\ref{sec:Simulation} illustrates the
effectiveness of the proposed algorithm in benchmark IEEE test
cases. Finally, we gather our conclusions in
Section~\ref{sec:conclusion}.
	
\section{Preliminaries}\label{sec:preliminaries}
This section introduces basic notation and concepts from graph theory
and optimization.
	
\subsection{Notation}
We denote by $\Pint$, $\real$ and $\complex$ the sets of positive
integers, reals and complex numbers, respectively.  We denote by
$|\N|$ the cardinality of the set~$\N$.  For a complex number
$a\in\mathbb{C}$, we let $|a|$ and $\angle a$ be the complex modulus
and angle of $a$. The $2$-norm of a complex vector $v \in \complex^n$
is written as $\|v\|$. Let $\mathbb{S}_+ \subset \complex^{n \times
  n}$ and $\mathcal{H}^n \subset \mathbb{S}_+$ be the set of positive
semidefinite and $n$-dimensional Hermitian matrices, respectively. For
$A\in\mathbb{C}^{n\times n}$, we let $A^*$ be its conjugate transpose
and $\tr\{A\}$ be its trace.  For $A,B\in\mathcal{H}^n$, we denote
their inner product by $\langle A,B \rangle=\tr\{AB\}$.  We use
${\bigtriangledown} F$ to denote the gradient of the scalar
function~$F$.
	
\subsection{Graph Theory}\label{sec:graph}
We review basic notions of graph theory
following~\cite{FB-JC-SM:08cor}.  A graph is a pair $\graph =
(\N,\E)$, where $\N \subseteq\Pint$ is its set of vertices or nodes
and $\E\subseteq\N\times\N$ is its set of edges. A \textit{loop} is an
edge that connects a vertex to itself. Two nodes $i,k\in\N$ are
\textit{connected} if $\{i,k\}\in\E$. The graph is \textit{undirected}
if $\{i,k\} = \{k,i\} \in\E$. The \textit{local neighborhood} of a
node $k$ in an undirected graph is $\N_k: =\setdef{l \in \N}{
  \{l,k\}\in\E}\cup\{k\}$. The \textit{degree} of a node $k$ is
$|\N_k|-1$.  In a directed graph, each pair of vertices in $\E$ is
ordered such that the corresponding edge $\{i,k\}$ has a direction,
with $i$ and $k$ distinguished as the \textit{tail} and \textit{head}
nodes, respectively. Node $i$ is a \textit{source} (resp.
\textit{sink}) if it is the tail (resp. head) node of all the edges it
belongs to.  Node $k$ is an \textit{out-neighbor} of node $i$ if
$\{i,k\}\in\E$. The \textit{out-degree} of node $i$ is defined as
$|\{k \in \N|\{i,k\}\in\E\}|$. A \textit{path} in a (directed or
undirected) graph is a sequence of vertices such that any two
consecutive nodes correspond to an edge of the graph. The
\textit{length} of a path is the number of its corresponding
edges. The \textit{diameter} of a graph is the maximum length of the
shortest path connecting any two graph vertices in the graph. A
\textit{cycle} is a path whose first and last vertices are the same.
A graph is \textit{acyclic} if it contains no cycles.  An
\textit{orientation} of an undirected graph is an assignment of
exactly one direction to each of its edges.  A graph orientation is
acyclic if the resulting directed graph is acyclic. A graph is
\textit{bipartite} if the set of its vertices can be decomposed into
two disjoint subsets such that, within each one, no two vertices are
connected.

A \textit{simple graph} is a graph with no loops nor multiple edges
connecting any pair of two vertices.  A \textit{planar graph} is a
graph that can be drawn on the plane in a way that its edges intersect
only at their endpoints.  A \textit{vertex-induced subgraph} of
$\graph=(\N,\E)$, written as $\graph_s[\N_s]$, is a subgraph of
$\graph$ with the set of nodes $\N_s\subseteq\N$ and set of edges
$\E_s = \E\cap(\N_s\times\N_s)$. A \textit{chordal graph} is a graph
that does not contain an induced cycle of length greater than four.
\textit{Graph coloring} consists of assigning a color to every node in
the graph in such a way that any pair of connected nodes have
different colors. The smallest number of colors needed to color a
graph $G$ is called its chromatic number.

\subsection{Strong Duality of Convex Optimization}
Here, we review some fundamental concepts in convex optimization
following~\cite{SB-LV:09}. Consider a convex optimization problem of
the form
\begin{align}
  \label{eq:convex_opt}
  \min_{x} f_0(x), \quad\text{s.t. } Ax = b,\; f_i(x)\leq 0, \;i =
  1,\dots,m,
\end{align}
where $f_0,\dots,f_m:\real^n\rightarrow\real$ are convex functions,
$A\in\real^{n\times r}$, $b\in\real^r$, and $Ax = b$ defines affine
equality constraints. The dual problem of
optimization~\eqref{eq:convex_opt} is given as
\begin{align}
  \label{eq:convex_opt_dual}
  \max_{\lambda\geq 0,\mu}\Big(\min_x f_0(x) + \sum_{i=1}^m \lambda_i
  f_i(x) + \mu^\top(Ax-b) \Big),
\end{align}
where $\lambda\in\real^m$ and $\mu\in\real^r$ are known as Lagrange
multipliers. Let $p^\star$ and $d^\star$ be the optimal value of the
primal and dual problems, respectively.  Strong duality holds if
$p^\star = d^\star$.  Under strong duality, the Karush-Kuhn-Tucker
(KKT) conditions are a necessary and sufficient characterization of
the optimality of the primal-dual solution
$(x^\star,\lambda^\star,\mu^\star)$,
\begin{align*}
  \begin{cases}
    0 \in \bigtriangledown f_0(x^\star) +
    \sum_{i=1}^m\lambda_i^\star\hspace{-1mm} \bigtriangledown
    f_i(x^\star) + (\mu^\star)^\top Ax^\star,
    \\
    \lambda_i^\star f_i(x^\star) = 0, \quad \forall i = 1,\dots,m,
    \\
    (\mu^\star)^\top (Ax^\star-b ) = 0,
    \\
    Ax^\star=b,\quad f_i(x^\star)\leq 0, \quad \forall i = 1,\dots,m,
    \\
    \lambda_i^\star\geq 0, \quad \forall i = 1,\dots,m.
  \end{cases}
\end{align*}
These conditions correspond to stationarity, complementary slackness,
and primal and dual feasibility, respectively.  The (refined) Slater's
condition holds if there exists $x \in \real^n$ with
\begin{align*}
  Ax = b \text{ and } f_i(x)<0, \quad \forall i=1,\dots,m .
\end{align*}
Slater's condition implies that strong duality holds.

\section{Problem Formulation}\label{sec: ProbSet}
This section introduces the problem of interest. We begin with a
general formulation of the optimal power flow (OPF) problem over an
electrical network. We then consider its convex relaxation and rewrite
it as the combination of several smaller-scale interconnected convex
subproblems. The resulting semidefinite programming (SDP) problem is
the starting point for our distributed algorithm design.
	
Consider an electrical network graph with generation buses $\N_G$,
load buses $\N_L$, and electrical interconnections described by an
undirected edge set $\E$.  Let $\N= \N_G \cup \N_L$ and denote its
cardinality by~$N$.  We denote the phasor voltage at bus $i$ by $V_i =
E_i e^{j \theta_i}$, where $E_i\in\mathbb{R}$ and
$\theta_i\in[-\pi,\pi)$ are the voltage magnitude and phase angle,
respectively.  When convenient, we let $V = \setdef{V_i}{i\in\N}$
denote the collection of voltages at all buses. The active and
reactive power injections at bus $i$ are given by the power flow
equations~\cite{SB-DG-SL-KC:11}
\begin{align*}
  P_i & = \tr\{Y_iVV^*\} + P_{D_i},
  \\
  Q_i& = \tr\{\bar{Y}_iVV^*\} + Q_{D_i},
\end{align*}
where $P_{D_i},Q_{D_i}\in\R$ are the active and reactive power
demands\footnote{Some buses may have generation and load
  simultaneously. For buses with only generators, $P_{D_i},Q_{D_i}$
  are both zero.} at bus $i$, and $Y_i, \bar{Y}_i \in\mathcal{H}^{N}$
are derived from the admittance matrix
\textbf{Y}$\in \complex^{N \times N}$ as follows
\begin{subequations}\label{eq:admit}
  \begin{align}
    Y_i & = \frac{(e_ie_i^\top\textbf{Y})^* + e_ie_i^\top\textbf{Y}}{2},
    \\
    \bar{Y}_i & = \frac{(e_ie_i^\top\textbf{Y})^* -
      e_ie_i^\top\textbf{Y}}{2j}.
  \end{align}  
\end{subequations}
Here $\{e_i\}_{i= 1,\dots,N}$ denotes the canonical basis of
$\real^N$.  The OPF problem also involves the following box
constraints
\begin{align}\label{eq:allConst}
  &\underline{V}_i^2 \leq |V_i|^2 \leq \bar{V}_i^2, \; \forall i\in
  \mathcal{N}, \nonumber
  \\
  &\underline{P}_i \leq P_i \leq \bar{P}_i, \;\; \underline{Q}_i \leq
  Q_i \leq \bar{Q}_i, \; \forall i\in
  \mathcal{N},
  \\
  \nonumber &|V_{i} - V_{k}|^2 \leq \bar{V}_{ik}, \;\forall \{i,k\}
  \in \mathcal{E},
\end{align}
where $\bar{V}_{ik}$ is the upper bound of the voltage difference
between buses $i,k$, and $\underline{V}_i$ and $\bar{V}_i$ are the
lower and upper bounds of the voltage magnitude at bus $i$,
respectively. The quantities $\underline{P}_i,\underline{Q}_i,
\bar{P}_i, \bar{Q}_i$, are defined similarly. The objective function
for the OPF problem is typically given as a quadratic function of the
active power injection,
\begin{align}\label{eq:Cost}
  & \sum_{k \in \mathcal{N}_G}c_{i2} P_i^2 + c_{i1} P_i,
\end{align}
where $c_{i2}\geq 0$, and $c_{i1}\in \mathbb{R}$. Using
$W = VV^*\in \mathcal{H}^N $ as the decision variable, the
OPF problem is formulated as follows
\begin{flalign*}
  % \label{eq:Opt1}
  \textbf{(P1)} \hspace{1mm}\min_{W}\hspace{-1mm} \sum_{i \in
    \N_G}\hspace{-1mm} c_{i2}
  (\tr\{Y_iW\}+\hspace{-1mm}P_{D_i})^2\hspace{-2mm} +\hspace{-1mm}
  c_{i1} (\tr\{Y_iW\}\hspace{-1mm}+P_{D_i}), &&
\end{flalign*}
subject to
\begin{subequations}\label{eq:const}
  \begin{alignat}{5}
    & \underline{P}_i \leq \tr\{Y_iW\} + P_{D_i} \leq \bar{P}_i, \;
    \forall i\in \mathcal{N},
    \label{eq:const-c}
    \\
    & \underline{Q}_i \leq \tr\{\bar{Y}_iW\} + Q_{D_i} \leq \bar{Q}_i
    , \; \forall i\in \mathcal{N} ,
    \label{eq:const-d}
    \\
    & \underline{V}_i^2 \leq \tr\{M_iW\} \leq \bar{V}_i^2, \; \forall
    i\in \mathcal{N} , \displaybreak[0]
    \label{eq:const-e}
    \\
    & \tr\{M_{ik}W\} \leq \bar{V}_{ik}, \; \forall \{i,k\} \in
    \mathcal{E},
    \label{eq:const-f}
    \\
    & W \succeq 0, \quad\rank(W)=1,
    \label{eq:const-g}
  \end{alignat}
\end{subequations}
where $M_i, M_{ik}\in \mathcal{H}^{N}$ are defined so that
$\tr\{M_iW\} = |V_i|^2$ and $\tr\{M_{ik}W\} = |V_i-V_k|^2$.
Constraints~(\ref{eq:const-c}-\ref{eq:const-f}) come from
Eq.~\eqref{eq:allConst}. The combined constraints $W \succeq 0$ and
$\rank(W)=1$ in~\eqref{eq:const-g} correspond to writing the
voltage as a matrix variable. The elimination of the rank constraint
gives rise to the convex relaxation of the OPF problem. 
	
Following the exposition in~\cite{CYC-WZ:16}, we next reformulate the
OPF problem in a distributed way as follows. We start from the
observation that every constraint except Eq.~\eqref{eq:const-g} in
\textbf{(P1)} is either related to the power injection at one bus or
the voltage difference between connected buses. In addition, the power
injection at one bus is only related to the voltage of the buses it is
connected to. This property is embedded in the structure of the
non-zero entries of the admittance matrix in~\eqref{eq:admit}. As a
consequence, we can rewrite the
constraints~(\ref{eq:const-c}-\ref{eq:const-f}) in terms of variables
$W_i \in \mathcal{H}^{N_i}$ for all $i\in\N\textbf{}$, where $W_i$ is
quadratic in the voltage variables corresponding to the local
neighborhood $\N_i$ in the electrical network graph, and $N_i =
|\N_i|$.
% \margins{I don't think we have given a formal definition of $W_i$,
%   haven't we? I understand that $W_i$ is obtained similarly to $W$
%   but only considering the voltages $\{V_j\,|\,j \in \Nc_i \cup
%   \{i\}\}$, is this correct? And so, what does the notation for
%   $W_{\Nc}$ mean?  The notation that we are using for vectors is
%   given in terms of $\{\dots \}$, see for example the notation given
%   for $V$. But if we interpret thenotation for $W_{\Nc}$ mean? if we
%   interpret it as in a vector notation, with matrices $W_i$ in place
%   of real numbers, it does not make any sense, does it? Unless I'm
%   not getting anything, either we change the notation of vectors to
%   parenthesis or else we have to explain here what this means
%   exactly.}  \marginch{$W_{\Nc}$ is a set. I define $W_{\Nc}$ later
%   to remove the confusion}
In this way, we obtain
\begin{subequations}
  \label{eq:constdis}
  \begin{alignat}{6}
    & \underline{P}_i \leq \tr\{Y_{i,r}W_i\} + P_{D_i} \leq \bar{P}_i,
    \; \forall i\in \mathcal{N}, \label{eq:Pbdd_r}
    \\
    & \underline{Q}_i \leq \tr\{\bar{Y}_{i,r}W_i\} + Q_{D_i} \leq
    \bar{Q}_i, \; \forall i\in \mathcal{N}, \label{eq:Qbdd_r}
    \\
    & \underline{V}_i^2 \leq \tr\{M_{i,r}W_i\} \leq \bar{V}_i^2, \;
    \forall i\in \mathcal{N}, \label{eq:VoltMag}
    \\
    & \tr\{M_{ik,r}W_k\} \leq \bar{V}_{ik}, \; \forall \{i,k\} \in
    \mathcal{E}.
    % & W_k \succeq 0, \;\; \text{rank}(W_k)=1 \;\; \forall k \in
    % \mathcal{N}, 
  \end{alignat}
\end{subequations}
Here, $Y_{i,r}$ is the principal submatrix of $Y_{i}$ obtained by
dropping the rows and columns associated with the buses in
$\N\setminus\N_i$. The matrices $\bar{Y}_{i,r}$, $M_{i,r}$, $M_{ik,r}$
are defined similarly.

Define $W_{\Nc}$ as a shorthand notation for the set of variables,
$W_i$ for $i\in\N$, i.e., $W_{\Nc}:= \{W_i,i\in\N\}$. We next consider
the following distributed convex optimization problem associated with
$W_{\Nc}$,
\begin{subequations}
  \begin{flalign}
    \textbf{(P2)} \hspace{10mm} \min_{W_{\mathcal{N}}} \sum_{i \in
      \N_G} & c_{i2}
    (\tr\{Y_{i,r}W_i\}+P_{D_i})^2 \label{eq:P2-cost} \\ \nonumber +
    &c_{i1} (\tr\{Y_{i,r}W_i\}+P_{D_i}), &&
  \end{flalign}
  subject to
  \begin{align}
    & \text{Eq.~\eqref{eq:constdis} holds}, \label{eq:P2-c1}
    \\
    &W_i\succeq 0, \quad \forall i\in\N, \label{eq:P2-c2}
    \\
    & W_i(\hat{i},\hat{i}) = W_k(\hat{i},\hat{i}), \; \forall \{i,k\}
      \in \E, \label{eq:linear-eq-P2-1}
    \\
    & W_i(\hat{i},\hat{k}) = W_k(\hat{i},\hat{k}), \; \forall \{i,k\}
      \in \E, \label{eq:linear-eq-P2-2}
  \end{align}
\end{subequations}
where $\hat{i}$ refers the row (or column) of the matrix associated
with bus~$i$ (note that $W_i$ and $W_k$ might have different
dimensions).
%
% \marginch{We need the hat because the dimension of $W_i$ and $W_k$
% are not the same ($N_i \neq N_k$) in general and the meaning of each
% entry is not that straight forward. For example, $W_i(1,1)$ and
% $W_k(1,1)$ may refer to the voltage of different buses. }
%
% We rewrite the equality constraints of $W_i$ and $W_k$,
% $\{i,k\}\in\mathcal{E}$, in the following matrix form for
% convenience
For chordal graphs, \textbf{(P2)} with the additional non-convex rank
constraints, $\rank(W_i)=1$, $\forall i\in \N$ is equivalent to
\textbf{(P1)}, see~\cite{SB-SHL-TT-BH:15}.  For general graphs,
\textbf{(P2)} with the rank constraints does not necessarily give an
optimal solution of \textbf{(P1)}, but simulations
indicate~\cite{CYC-WZ:16-2} that \textbf{(P2)} has a low-rank solution
whose value is close to the optimal value of~\textbf{(P1)}. In the
rest of the paper, we assume that a unique optimal solution of
\textbf{(P2)} exists, and we denote it as $(W^\star_{\N},p^\star)$.

Our objective in this paper is to design a distributed algorithm to
solve \textbf{(P2)}. We view each bus of the electrical network as a
computing agent that can communicate with any other bus which is
physically connected to. By distributed, we mean that each agent only
requires information from neighboring buses that share their local
variables to implement the algorithm. By solving the optimization
problem, we mean that each bus eventually finds its own optimal
allocation (not the optimal allocation for the whole electrical
network). When considered collectively, the local optimal allocation
with agreement on the shared variables yields the complete optimal
solution.
	
\section{The Scheduled-Asynchronous Algorithm}\label{sec:DisAlgo}

In this section, we first provide a design rationale for the
scheduled-asynchronous distributed algorithm and then introduce it
formally. We next proceed to characterize the algorithm convergence
properties. % regarding convergence and robustness to packet drops.
	
\subsection{Rationale for Algorithm Design}
	
For convenience of exposition, we start by rewriting the
optimization~\textbf{(P2)}. To this end, for each $i \in \Nc$, define
$\map{f_i}{\Hc^{N_i}}{\real}$ as the objective function, and let
$\W_i \subset \Hc^{N_i}$ be the constraint set defined
by~\eqref{eq:constdis} and the constraint $ W_i\succeq 0$. Note that
$\W_i$ is compact for every $i\in\N$, where the boundedness of $\W_i$
is the result of the bounded diagonal elements of $W_i$ and the
positive definiteness of $W_i$.  The objective function $f_i$ is given
by~\eqref{eq:P2-cost}, for $i\in\N_G$, and $f_i = 0$, for
$i\in\N\setminus\N_G$.  To represent the equality constraints
in~\eqref{eq:linear-eq-P2-1} and \eqref{eq:linear-eq-P2-2}, we
introduce the functions
$\map{G_{ik}}{\Hc^{N_i} \times \Hc^{N_k}}{\real^4}$,
\begin{align}
  \label{eq:EqulConstG}
  & G_{ik}(W_i,W_k) =  D_{ik}(W_i) +  D_{ki}(W_k),
\end{align}
where
\begin{align*}
  & D_{ki}(W_l) = \hspace{-1mm}\begin{bmatrix} \tr\{B_{1,ki}W_l\} \\
  \tr\{B_{2,ki}W_l\} \\
  \tr\{B_{3,ki}W_l\} \\
  \tr\{B_{4,ki}W_l\}
  \end{bmatrix}\hspace{-1mm},
  D_{ik}(W_l) = -\begin{bmatrix} \hspace{-1mm}\tr\{B_{2,ik}W_l\} \\
  \tr\{B_{1,ik}W_l\} \\
  \tr\{B_{3,ki}W_l\} \\
  \tr\{B_{4,ki}W_l\}
  \end{bmatrix}. \\
  & B_{1,ki}(l,m) =
  \begin{cases}
    1,  & \text{if } l=m=\hat{k},\\
    0, &  \text{otherwise},
  \end{cases}\displaybreak[0]
  \\ \nonumber
  & B_{2,ki}(l,m) =
  \begin{cases}
    1, &  \text{if $l=m=\hat{i}$,} \\ 
    0, & \text{otherwise},
  \end{cases} \displaybreak[0] \\ \nonumber 
  & B_{3,ki}(l,m) =
  \begin{cases}
    1, & \text{if $(l,m)=(\hat{k},\hat{i})$ or
      $(l,m)=(\hat{i},\hat{k})$,} \\
    0, & \text{otherwise},
  \end{cases} \displaybreak[0] \\ \nonumber
  & B_{4,ki}(l,m) =
  \begin{cases}
    -j, & \text{if $(l,m)=(\hat{k},\hat{i})$,} \\ 
    j,  &  \text{if $(l,m)=(\hat{i},\hat{k})$,} \\ 
    0,  & \text{otherwise}.
  \end{cases}
\end{align*}
%\margins{here is a good place to introduce the notation of the  $D_{ik}$. We could then express the $G_{ik}$ using those variables.}
Note that the linear equality
constraints~\eqref{eq:linear-eq-P2-1}-\eqref{eq:linear-eq-P2-2} can be
equivalently represented in compact form by $G_{ik}(W_i,W_k) = 0$, for
all $\{i,k\} \in \hat{\E}$, where~$\hat{\graph} = (\N,\hat{\E})$ is an
arbitrarily selected orientation of the original undirected
graph~$\graph = (\N,\E)$. With these elements in place, we rewrite
the optimization~\textbf{(P2)} in the following form
\begin{align}\label{eq:OPF_algo}
  &\min_{W_i\in\W_i, i \in \Nc } \sum_{i\in \N} f_i(W_i)
  \\
  \nonumber &\text{s.t. } G_{ik}(W_i,W_k) = 0, \; \forall
              \{i,k\}\in\hat{\E}.
\end{align}

To motivate our algorithm design, we start by considering the
optimization in \textbf{(P2)} for a two-bus network ($N=2$).  In this
case, from the formulation~\eqref{eq:OPF_algo}, the problem exactly
corresponds to the standard ADMM, see e.g.,~\cite{SB-NP-EC-BP-JE:11},
\begin{align*}
  &\min_{W_1\in\W_1, W_2\in\W_2} f_1(W_1) + f_2(W_2)\text{ \quad
    s.t. } G_{12}(W_1,W_2) = 0.
\end{align*}
The ADMM algorithm consists of the following steps
\begin{subequations}
  \label{eq:ADMM}
  \begin{alignat}{3}
    % \centering
    W_1^{t^+} &= \argmin_{W_1\in\W_1} f_1(W_1) \label{eq:ADMM-1}
    \\
    \nonumber &\hspace{-2mm}+ {p_{12}^t}^\top G_{12}(W_1,W_2^t) +
    \frac{\rho_{12}}{2} \|G_{12}(W_1,W_2^t)\|^2,
    \\
    W_2^{t^+} &= \argmin_{W_2\in\W_2} f_2(W_2) \label{eq:ADMM-2}
    \\
    \nonumber &\hspace{-2mm}+ {p_{12}^t}^\top G_{12}(W_1^{t^+},W_2) +
    \frac{\rho_{12}}{2} \|G_{12}(W_1^{t^+},W_2)\|^2,\label{eq:ADMM-3}
    \\
    p_{12}^{t^+} &= p_{12}^{t} + \rho_{12} G_{12}(W_1^{t^+},W_2^{t^+}),
  \end{alignat}
\end{subequations}
where the superscript $t$ is the time at which the update occurs,
$t^+$ is the time for the next round of the optimization,
$\rho_{12}>0$ is a given constant scalar, and
$p_{12}^t\in\mathbb{R}^4$ are the Lagrange multipliers
associated with the constraint $G_{12}(\cdot)=0$.  Note that there is a natural
order in performing the updates in~\eqref{eq:ADMM}, where node $1$
goes first, and then node $2$ uses the value obtained by $1$ to
perform its update.

For a network with an arbitrary number of buses, one can view the
optimization as a combination of multiple two-bus sub-problems. This
viewpoint inspires the following algorithm design.  Once bus $i$
receives the updated $W_k^t$ from all its neighboring nodes $k\in
\N_i$, it solves the following optimization
\begin{subequations}
	\label{eq:local_update}
\begin{alignat}{3}
  &W_i^{t^+} = \argmin_{W_i\in\W_i}f_i(W_i)\label{eq:local_update-1}
  \\
  &\;+\hspace{-3mm}\sum_{\{i,k\}\in \hat{\E}}\hspace{-3mm}\Big(
  {p_{ik}^t}^\top G_{ik}(W_i,W_k^t) +
  \frac{\rho_{ik}}{2}\|G_{ik}(W_i,W_k^t)\|^2 \Big)\label{eq:local_update-2} \\ 
  &\;+\hspace{-3mm}\sum_{\{k,i\}\in \hat{\E}}\hspace{-3mm}\Big(
  {p_{ik}^t}^\top G_{ki}(W_k^{t^+}\hspace{-1mm},W_i) +
  \frac{\rho_{ik}}{2}\|G_{ki}(W_k^{t^+}\hspace{-1mm},W_i)\|^2 \Big), \label{eq:local_update-3}
\end{alignat}
\end{subequations}
where $p^t_{ik}$ and $\rho_{ik}$ are non-directional, namely,
$p^t_{ik} = p^t_{ki}$ and $\rho_{ik} = \rho_{ki}$. Note that,
according to~\eqref{eq:local_update}, for every edge (i.e., for every
two-bus sub-problem), the tail node performs first the update,
followed by the head node. In other words, the orientation
$\hat{\graph} = (\N,\hat{\E})$ encodes the natural ordering of
updating by the terminal nodes present in the ADMM algorithm.
% Notice that by writing the updating rule in
% Eq.~\eqref{eq:local_update}, we have $\hat{\E}$ define the ordering
% of optimization of terminal nodes for every edge. The two nodes of
% each edge take turns in performing the optimization
% in~\eqref{eq:local_update}. Notice that,
Under the proposed design, the difference in the number of iterations
made between two connected nodes is at most one due to the alternating
execution.
% (the node that does~\eqref{eq:ADMM-1} goes first and hence may have
% % executed one more iteration than the other node at any given time)
% The tail node of each edge does the first step~\eqref{eq:ADMM-1} and
% the head node does the second step~\eqref{eq:ADMM-2}. 
For each pair of connected nodes $i$ and $k$ such that
$\{i,k\}\in\hat{\E}$, each of them updates the corresponding Lagrange
multiplier according to
\begin{align}\label{eq:lagran_update}
  p_{ik}^{t^+} = p_{ik}^t +
  \rho_{ik}G_{ik}(W_i^{t^+}, W_k^{t^+}) .
\end{align}
% \begin{subequations}\label{eq:lagran_update}
%   \begin{alignat}{2}
%     \text{Head node: } &p_{ik}^{t^+} = p_{ik}^t +
%     \rho_{ik}G_{ik}(W_k^{t^+}, W_i^{t^+}),\label{eq:lagran_update_1}
%     \\
%     \text{Tail node: } &p_{ik}^{t^+} = p_{ik}^t +
%     \rho_{ik}G_{ik}(W_i^{t^+}, W_k^{t^+}).\label{eq:lagran_update_2}
%   \end{alignat}
% \end{subequations}
Updating $p_{ik}^{t^+}$ locally at the terminal nodes can reduce the
communication burden and enhances robustness. We denote $p^t =
\{p^t_{ik}, \{i,k\}\in\E\}\in\real^{4|\E|}$ and
$\rho\in\real^{|\E|\times |\E|}$ be the diagonal matrix such that each
diagonal element corresponds to $\rho_{ik}$ of one unique link in
$\E$.  Algorithm~\ref{algo:pd} below presents formally the proposed
strategy.

\begin{algorithm}
  \caption{Scheduled-Asynchronous Algorithm}
  \begin{algorithmic}[1]
    % \Inputs {System matrices: $A,B,K$}
    \Initialize {$W_{\N}^0 \in\prod_{i\in\N}\W_i$, $p^0 = 0$,
      $\gamma^0_l = 2\epsilon>0,\; \forall l\in\N$ }
    \\
    \textbf{Requires:} acyclic orientation $\mathcal{\hat{G}}$ of
    $\graph$
    \\
    \textbf{For every bus $i$,}
    \While {(received $W_k^t,\gamma_k^t$
      from all $k\in\N_i$) \textbf{and} ($\exists l\in\N_i$
      s.t. $\gamma_l^t>\epsilon$) } 
    \State \textbf{Update} $p_{ik}^{t^+}$ by
    Eq.~\eqref{eq:lagran_update} for $\{i,k\}\in\hat{\E}$ \State
    \textbf{Update} $W^{t^+}_i$ by solving
    optimization~(\ref{eq:local_update}) 
    \State \textbf{Update} $p_{ik}^{t^+}$ by
    Eq.~\eqref{eq:lagran_update} for $\{k,i\}\in\hat{\E}$ 
    \State \textbf{Compute} $\gamma_i^{t^+}$ by
    Eq.~\eqref{eq:comp_error}
    \State \textbf{Send} $W_i^{t^+}$ and $\gamma_i^{t^+}$ to all
    $k\in\N_i$
    \EndWhile
    % \Outputs {optimality $\gamma_k$}
  \end{algorithmic}
  \label{algo:pd}
\end{algorithm}

In Algorithm~\ref{algo:pd}, each bus $i$ only does its optimization
after it has received new updates from all its neighbors $k\in\N_i$
since the last iteration.  The stopping criteria is given by the scale
of the violation of the equality constraints,
\begin{align}\label{eq:comp_error}
  \gamma_i^{t^+}\hspace{-2mm} = \hspace{-2mm}\sum_{\{i,k\}\in
    \hat{\E}}\hspace{-3mm}
  \|G_{ik}(W_i^{t^+}\hspace{-1mm},W_k^{t})\|^2 +
  \hspace{-3mm}\sum_{\{k,i\}\in \hat{\E}}\hspace{-1mm}
  \|G_{ki}(W_k^{t^+}\hspace{-1mm},W_i^{t^+})\|^2.
\end{align}
This criteria is justified by the observation that if
$\gamma_i^{t^+}=0$, $\forall i\in\N$, then $W_{\N}^{t^+}$ is the optimal
solution for both~\eqref{eq:local_update} and
\eqref{eq:OPF_algo}. Using the continuity of the cost function, $f_i$,
having $\gamma_i^{t^+}$ sufficiently small for all $i\in\N$ guarantees
that the solution of Algorithm~\ref{algo:pd} is reasonably close to
the optimum.  The implementation of Algorithm~\ref{algo:pd} does not
require synchronous updates between connected nodes, but involves a
subtle ordering. We therefore term this strategy as the
\emph{scheduled-asynchronous} algorithm. Note that we use a global
time index $t$ to time-stamp all the iterations in
Algorithm~\ref{algo:pd} only for convenience. Every agent tracks the
number of iterations locally, but in general does not know the global
time index.
	
\begin{remark}\longthmtitle{The orientation of the network
    graph must be acyclic}\label{rem:acyclic_graph}
  {\rm An important observation regarding the execution of
    Algorithm~\ref{algo:pd} is that the orientation
    $\mathcal{\hat{G}}$ given to the electrical network graph must be
    free of cycles. Otherwise, given the meaning encoded by the
    orientation of each edge, a deadlock would occur: every node at
    the cycle would be waiting for the update from a neighboring node
    in the cycle. There are various ways in which the network can
    determine an acyclic orientation in a distributed way. For
    instance, if every node has a unique identity $k\in\mathbb{N}$,
    then, for each edge in $\E$, one can designate the node with the
    smallest identity as the tail and the other node as the head.  The
    resulting graph $\mathcal{\hat{G}}$ is acyclic~\cite{RD:79}.  We
    revisit this point in Section~\ref{sec:Direct_graph}
    below.}\oprocend
\end{remark}

\begin{remark}\longthmtitle{The scheduled-asynchronous algorithm as a
    single-valued map}\label{rem:set-valued} {\rm The
    scheduled-asynchronous algorithm is, in general, a set-valued map
    due to the argmin operator in~\eqref{eq:local_update}. However, we
    argue here that it can  be seen as a single-valued map upon
    further examination of~\eqref{eq:local_update}.  Notice that,
    except for  the constraint $W_i\succeq 0$, all the other constraints and
    the objective function of~\eqref{eq:local_update} are only related to
    the entries associated with the star network centered at
    node~$i$. We illustrate the meaning of entries associated with a
    star 
    network in an  example with $5$ nodes. For $W_1$, its diagonal,
    first column, and first row elements are the entries associated
    with the star network centered at node one.
    \begin{align*}
      W_1&= \begin{bmatrix}
        W_{1}(1,1) & W_{1}(1,2) & \cdots & W_{1}(1,5) \\
        W_{1}(2,1) & W_{1}(2,2) & & W_{1}(2,5) \\
        \vdots & & \ddots & \vdots \\
        W_{1}(5,1) & W_{1}(5,2) & \cdots& W_{1}(5,5)
      \end{bmatrix} & \begin{tikzpicture}[baseline]
        [scale=.3,auto=right,every node/.style={circle,fill=blue!5}]
        \node (n1) at (0,0) {1}; \node (n2) at (.7,.7) {2}; \node (n3)
        at (-.7,.7) {3}; \node (n4) at (-.7,-.7) {4}; \node (n5) at
        (.7,-.7) {5}; \foreach \from/\to in {n1/n2,n1/n3,n1/n4,n1/n5}
        \draw (\from) -- (\to);
      \end{tikzpicture}
    \end{align*}
    We refer to the entries of $W_i$ that are not associated with the
    star network centered at node $i$ as ``irrelevant'', because those
    entries can take any value without affecting the optimal value
    of~\eqref{eq:local_update} as long as $W_i\succeq 0$ remains
    satisfied. Without loss of generality, Algorithm~\ref{algo:pd} can
    always assign zeros to the irrelevant entries. Such assignment
    makes $W_i$ a Hermitian matrix associated with a star network,
    and~\cite[Proposition 3]{CYC-WZ:16} ensures that $W_i\succeq 0$.
    The manipulation above makes the objective function
    of~\eqref{eq:local_update} strongly convex on the decision
    variables (irrelevant entries are considered as constants) due to
    the quadratic terms in \eqref{eq:local_update-2} and
    \eqref{eq:local_update-3}. This observation justifies the
    interpretation of the scheduled-asynchronous algorithm as a
    single-valued map $F:\W\times\real^{4|\hat{\E}|}\rightarrow
    \W\times\real^{4|\hat{\E}|}$ with
    $\W:=\W_1\times\W_2\times\cdots\times\W_N$.  }\oprocend
\end{remark}

\subsection{Convergence Analysis}
The analysis of the convergence properties of the
scheduled-asynchronous algorithm requires a careful consideration of
the asynchronous updates of the nodes. In what follows and for
convenience, we view the time index of the decision variables as an
iteration index. For each bus $i \in \N$, let $t_i(n)$ be the time
index at which $i$ has exactly performed $n$ number of the
minimizations described in~\eqref{eq:local_update}. By definition, for
each $n$, we have $W_i^t = W_i^{t_i(n)} $, for $t_i(n) \leq t <
t_i(n+1)$. With a slight abuse of notation, in the following we use
the shorthand notation $W_i^n$, $p_i^n$ and $r_i^n$ instead of the
corresponding $W_i^{t_i(n)}$, $p_i^{t_i(n)}$ and~$r_i^{t_i(n)}$, where
$r^t = \{r_{ik}^t \;,\; \{i,k\}\in\hat{\E}\}\in\real^{4|\hat{\E}|}$
and $r_{ik}^t = G_{ik}(W_i^t,W_k^t)$.
% Given any pair $(W_{\N},p)$, we will denote by $(W_{\N}^+,p^+)$ the
% pair obtained after all nodes have performed a first iteration
% starting from $(W_{\N},p)$; that is, if $(W_{\N}^0,p^0)=(W_{\N},p)$,
% then $(W_{\N}^+,p^+) = (W_{\N}^1,p^1)$.
%
With all the elements in place, we next establish the convergence
properties of Algorithm~\ref{algo:pd}.

\begin{theorem}\longthmtitle{Convergence of
    Algorithm~\ref{algo:pd}}\label{thm:converge}
  Assume the following conditions hold
  \begin{enumerate}
  \item the cost functions $f_i$, $i\in\N$, are convex,
  \item \textbf{(P2)} is feasible and Slater's condition holds,
  \item the optimal Lagrange multipliers are bounded, $p^\star<\infty$,
  \item the orientation $\hat\graph$ is acyclic.
  \end{enumerate}
  Then, the sequence $(W_{\N}^n,p^n)$ generated by
  Algorithm~\ref{algo:pd} converges to the optimal primal-dual pair
  $(W_{\N}^\star,p^\star)$ as $n\rightarrow\infty$.
\end{theorem}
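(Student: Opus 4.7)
The plan is to verify the hypotheses of the LaSalle Invariance Principle for the discrete-time dynamical system induced by the algorithm, viewed as the single-valued map $F:\W\times\real^{4|\hat\E|}\to\W\times\real^{4|\hat\E|}$ identified in Remark~\ref{rem:set-valued}. The acyclicity of $\hat\graph$ and the gating condition in Algorithm~\ref{algo:pd} together guarantee no deadlock and that the iteration counts of any two connected buses differ by at most one, so the whole procedure can be indexed by a global round~$n$. The natural candidate Lyapunov function is the ADMM-style quantity
\[
V^n = \sum_{i\in\N} \|W_i^n - W_i^\star\|^2 + \sum_{\{i,k\}\in\hat\E} \frac{1}{\rho_{ik}}\,\|p_{ik}^n - p_{ik}^\star\|^2,
\]
which is well defined because $\W$ is compact and $p^\star<\infty$ by assumption.

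The first main step is to establish a descent inequality of the form $V^{n+1}-V^n \leq -\alpha\sum_{\{i,k\}\in\hat\E}\|G_{ik}(W_i^{n+1},W_k^{n+1})\|^2 - \beta\,\Delta W^n$ for some positive constants $\alpha,\beta$, where $\Delta W^n$ captures the primal variation between consecutive rounds. I would derive this by writing the first-order optimality condition of the augmented subproblem~\eqref{eq:local_update} at $W_i^{n+1}$ (which is strongly convex on the relevant entries after the zero-padding argument of Remark~\ref{rem:set-valued}), and pairing it with the KKT conditions of~\eqref{eq:OPF_algo} at $(W_\N^\star,p^\star)$ available through convexity of the $f_i$'s plus Slater's condition. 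Substituting the dual update~\eqref{eq:lagran_update} and using the identity $\|a+b\|^2-\|a\|^2=\|b\|^2+2\langle a,b\rangle$ on the quadratic and linear cross terms should yield the desired sign.

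Once $V^n$ is shown to be nonincreasing and bounded below, it converges; hence the right-hand side terms are summable. This forces $G_{ik}(W_i^n,W_k^n)\to 0$ for every edge, and $W_i^{n+1}-W_i^n\to 0$ for every node, matching the stopping criterion~\eqref{eq:comp_error}. Since $\W$ is compact and $(p^n)$ is bounded (from convergence of $V^n$ and $p^\star<\infty$), the trajectory has accumulation points. Applying the LaSalle Invariance Principle to $F$, any accumulation point must satisfy $G_{ik}=0$ everywhere and be a fixed point of each local argmin, which by the first-order conditions recovers the full KKT system of~\eqref{eq:OPF_algo}. By uniqueness of $(W_\N^\star,p^\star)$ and the fact that $V^n$ already converges as a whole sequence, the convergence of $(W_\N^n,p^n)$ to this pair follows.

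The hardest part will be the accounting of asynchrony in the descent step. Unlike synchronous ADMM, the update of $W_i^{n+1}$ uses the freshly updated values $W_k^{n+1}$ from tail-neighbors $k$ with $\{k,i\}\in\hat\E$ and the one-step-lagged values $W_k^n$ from head-neighbors $k$ with $\{i,k\}\in\hat\E$; this asymmetry prevents the cross terms in $V^{n+1}-V^n$ from telescoping along edges in the usual way. I expect to resolve this by ordering the atomic updates along a topological sort of $\hat\graph$ (which exists precisely because of acyclicity), so that each edge's contribution can be unambiguously assigned to exactly one primal step and one dual step, and then summed to form the nonpositive aggregate above. Carefully verifying that this edge-by-edge matching is consistent with the scheduling rule of Algorithm~\ref{algo:pd} is the technically delicate core of the argument.
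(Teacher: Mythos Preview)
Your high-level strategy (LaSalle Invariance Principle applied to the map $F$) is exactly what the paper does, but two concrete choices in your plan will not go through as stated. First, the candidate function: the paper does \emph{not} use the full primal distance $\sum_i\|W_i-W_i^\star\|^2$. Because the augmented subproblem is only strongly convex in the linear combinations $D_{ik}(W_i)$ (cf.\ Remark~\ref{rem:set-valued}), the full norm does not telescope. The working function is
\[
V(W_{\N},p)=\sum_{\{i,k\}\in\hat\E}\Big(\tfrac{1}{\rho_{ik}}\|p_{ik}-p_{ik}^\star\|^2+\rho_{ik}\|D_{ki}(W_k-W_k^\star)\|^2\Big),
\]
which, mirroring standard two-block ADMM, carries only the \emph{head}-node projections $D_{ki}(W_k)$ and no tail-node term at all.

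Second, and more seriously, the descent you anticipate does not separate into $-\alpha\sum\|G_{ik}\|^2-\beta\,\Delta W$. The asymmetry you correctly identify (fresh $W_k^{n+1}$ from tail-neighbors, lagged $W_k^{n}$ from head-neighbors) prevents the cross terms from decoupling; what one actually obtains is
\[
V^{n+1}-V^n\le-\sum_{\{i,k\}\in\hat\E}\rho_{ik}\,\bigl\|r_{ik}^{n+1}-D_{ki}(W_k^{n+1}-W_k^{n})\bigr\|^2,
\]
a single squared residual that can vanish with $r^{n+1}\neq 0$. Consequently, summability does \emph{not} force $G_{ik}\to 0$ or $W_i^{n+1}-W_i^n\to 0$ directly, and your argument that ``any accumulation point satisfies $G_{ik}=0$'' has a gap. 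The acyclicity of $\hat\graph$ is needed not in the descent bookkeeping (which is purely algebraic) but precisely here, to characterize the largest invariant set in $\{V\circ F=V\}$: starting from the sink nodes $\sink_0$ of $\hat\graph$, one shows their subproblems are stationary along any invariant trajectory, hence $r_{ki}=0$ and $D_{ik}(\Delta W_i)=0$ for all edges into $\sink_0$; peeling off $\sink_0$ and repeating on $\hat\graph[\N\setminus\sink_0]$ (still acyclic) propagates this up the DAG in finitely many layers until $r\equiv 0$ globally, at which point strong duality pins the trajectory to $(W_{\N}^\star,p^\star)$. You should relocate the topological-sort idea from the descent step to this invariant-set step.
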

\begin{proof}
  Our proof strategy consists of employing the LaSalle's Invariance
  Principle for discrete-time systems, [Theorem 1.19]
  in~\cite{FB-JC-SM:08cor}. To this goal, we will justify that all the
  assumptions of the LaSalle's theorem hold. First, according to
  Remark~\ref{rem:set-valued}, we can view $F$ as a single-valued
  mapping without loss of generality.  Furthermore, $F$ is continuous
  due to maximum theorem and condition 1).
  % In such case, we may not be able to show the convergence of
  % Algorithm~\ref{algo:pd} because it is necessary to resort to
  % set-valued LaSalle's invariance principle, which has weaker
  % convergence guarantees as the standard one. We address the issue
  % by the observation that
  % 
  % Hence, $F$ is a continuous single-valued mapping and satisfies the
  % continuity condition of [Theorem 1.19].
  % 
  % \marginch{The point is using LaSalle's theorem to prove the
  % convergence of the entries associated with the star
  % networks. Every other entry does not necessary converge to an
  % unique value. The good thing is that whatever value an irrelevant
  % entry takes (provided $W_i\succeq 0$), the final value
  % of~\eqref{eq:local_update} does not change and constraints are all
  % satisfied. }
  % 
  % \margins{note here that this map is singled-valued, as you pointed
  % out in the margin}
  % \margins{this mapping could be a set-valued mapping for general
  % $f_i$. I think we should remark after (11) that we assume this
  % mapping is a regular singled-valued mapping.}  \margins{OK, I
  % didn't find a good theorem that could give us the lipschitz
  % property we want. But we can use an theorem result ---I believe we
  % discussed this already some time ago--- since we have that the
  % solution is unique and $f$ is differentiable, the solutions are
  % given by $f'(x) =0$. Then, since the inverse exists, a $g(0) = x$,
  % then this $g$ has to be differentiable as well if $f'$ is
  % differentiable. Fromdifferentiability we have the locally
  % lipschitz property.}
  We next consider the candidate LaSalle function
  \begin{align}\label{eq:LaSalle}
    V(W_{\N},p) :=
    \hspace{-3mm}\sum_{\{i,k\}\in\hat{\E}}\hspace{-2mm}\bigg(
    \frac{\|p_{ik} -p^\star_{ik}\|^2}{\rho_{ik}} + \hspace{-1mm}
    \rho_{ik}\|D_{ki}(W_k - W_k^\star)\|^2\hspace{-1mm}\bigg).
  \end{align}
  For notational convenience, we write $V^n = V(W_{\N}^n,p^n)$. Recall
  that we assume $p^\star$ is bounded in condition 3) and
  $W_{\N}^\star$ is also bounded because $\W$ is compact. It follows
  that $V^0 <\infty$ for any bounded initial $(W^0_{\N},p^0)$.

  \emph{Monotonicity of LaSalle function}. We next show that $V$ is
  monotonically non-increasing along the solutions
  of~\eqref{eq:local_update}-\eqref{eq:lagran_update}, specifically,
  \begin{align}
    \label{eq:prove_3}
    V^{n+1} \hspace{-2mm}- V^n \leq- \hspace{-2mm}
    \sum_{\{i,k\}\in\hat{\E}}\hspace{-2mm}\rho_{ik}\|r_{ik}^{n+1}\hspace{-2mm}
    -D_{ki}(W_k^{n+1}\hspace{-1mm} - W_k^n)\|^2.
  \end{align}
  To show this inequality, we first sum the inequalities in
  Lemma~\ref{lem:pre_proof} to obtain
  \begin{align*}
    & \hspace{-2.0mm} \sum_{\{i,k\}\in\hat{\E}}
    \hspace{-2.5mm}\rho_{ik} D_{ki}^\top(W_k^{n+1}\hspace{-3mm} -
    W_k^n)D_{ik}(W_i^{n+1} \hspace{-3mm}- W_i^\star) \geq
    (p^{n+1}\hspace{-3mm} -p^{\star})^\top \hspace{-1mm}r^{n+1}.
  \end{align*}
  Using $r^{n+1} =\rho^{-1}(p^{n+1} -p^n)$ and $r^\star = 0$, we
  rewrite the inequality above as
  \begin{align*}
    &2\hspace{-2mm}\sum_{\{i,k\}\in\hat{\E}}\hspace{-2mm}\rho_{ik}
    D_{ki}^\top(W_k^n - W_k^{n+1}) \Big( D_{ki}(W_k^{n+1}\hspace{-2mm}
    - W_k^\star)- r_{ik}^{n+1} \Big)
    \\
    & \hspace{5mm} \geq 2(p^{n+1} -p^{\star})^\top\rho^{-1}
    (p^{n+1} -p^n ).
  \end{align*}
  Using the fact that
  \begin{align*}
    &2(p^{n+1} -p^{\star})^\top \rho^{-1}(p^{n+1} -p^n
    )=\|\sqrt{\rho}^{-1}(p^{n+1} -p^{\star})\|^2
    \\
    &\quad + \|\sqrt{\rho}^{-1}(p^{n+1} -p^n)\|^2 -
    \|\sqrt{\rho}^{-1}(p^{n} -p^{\star})\|^2,
    \\
    &2D_{ki}^\top(W_k^n - W_k^{n+1})D_{ki}(W_k^{n+1}\hspace{-2mm} -
    W_k^\star)= \|D_{ki}(W_k^n - W_k^\star)\|^2
    \\
    & \quad - \|D_{ki}(W_k^{n+1}\hspace{-2mm} - W_k^{n})\|^2 -
    \|D_{ki}(W_k^{n+1}\hspace{-2mm} - W_k^\star)\|^2,
  \end{align*}
  where $\sqrt{\rho}$ denotes the element-wise square root of the
  diagonal matrix $\rho$, then
  \begin{align*}
    &-\hspace{-2mm}\sum_{\{i,k\}\in\hat{\E}}\hspace{-1.5mm}\rho_{ik}\bigg(
    \|D_{ki}(W_k^{n+1} \hspace{-3mm}- W_k^\star)\|^2 \hspace{-1mm}-
    \|D_{ki}(W_k^n - W_k^\star)\|^2
    \\
    &+\|D_{ki}(W_k^{n+1}\hspace{-3mm} - W_k^n)\|^2\hspace{-1mm} +
    2D_{ki}^\top(W_k^n - W_k^{n+1}) r_{ik}^{n+1}\hspace{-1mm}\\ +
    &\|r^{n+1}_{ik} \|^2 \bigg) \geq \|\sqrt{\rho}^{-1}(p^{n+1}
    -p^{\star})\|^2 - \|\sqrt{\rho}^{-1}(p^n -p^{\star})\|^2.
  \end{align*}
  Using the definition~\eqref{eq:LaSalle} of $V$, we can identify the
  terms $V^n$ and $V^{n+1}$ in the inequality above to obtain
  \begin{align}
    &-\hspace{-2mm}\sum_{\{i,k\}\in\hat{\E}}\rho_{ik}\bigg(\|D_{ki}(W_k^{n+1}
    - W_k^n)\|^2+ 2{r_{ik}^{n+1}}^\top
    \\
    \nonumber & \cdot D_{ki}(W_k^n - W_k^{n+1}) +\|r^{n+1}_{ik}
    \|^2\bigg) \geq V^{n+1}\hspace{-1mm}-V^n
  \end{align}
  Rearranging the left-hand side leads to~\eqref{eq:prove_3}.

  \emph{Bounded Trajectories}.  We next justify that the trajectories
  of Algorithm~\ref{algo:pd} are bounded.  According
  to~\eqref{eq:local_update_iter}, the primal variable $W^n_{\N}$
  always evolves in a compact set $\W$ and therefore is bounded.  To
  show that the evolution of $p^n$ is also bounded, we can reason by
  contradiction. If it were not, then the sequence $V(W^n,p^n)$ would
  go to infinity, and this would contradict the fact that the sublevel
  sets of $V$ are invariant (which is as a consequence
  of~\eqref{eq:prove_3}).

  \emph{Application of LaSalle Invariance Principle.}  Given our
  discussion above, all assumptions of the LaSalle Invariance
  Principle~\cite{FB-JC-SM:08cor} hold and we conclude that as $n\rightarrow
  \infty$, $(W^n_{\N},p^n)$ converges to the largest invariant set
  $\I_I$ contained in $\I_0$, where
  \begin{align}\label{eq:I0}
    \I_0:=\{(W_{\N},p) \,| \, V(F(W_{\N},p))-V(W_{\N},p)=0 \}.
  \end{align} 
  Our final step to establish the result is to show that $\I_I = \{(W_{\N},p)|V(W_{\N},p)=0\}$. To this end, let $(W^0_{\N},p^0)$ be an
  arbitrary point in $\I_I$. Consider the algorithm trajectory starting
  from $(W^0_{\N},p^0)$, which by definition of the notion of
  invariance must remain in $\I_0$.  The next equalities must hold
  because of the definition of $\I_0$,
  \begin{subequations}\label{eq:invar_sup}
    \begin{alignat}{3}
      &r^{n+1}_{ik} = D_{ki}(W_k^{n+1}\hspace{-1mm} -
      W_k^n), \label{eq:invar_sup1}
      \\
      \Longleftrightarrow \; & D_{ik}(W_i^{n+1}) = -
      D_{ki}(W_k^{n}), \label{eq:invar_sup2}
      \\
      \Longleftrightarrow \; & r^{n+1}_{ik} =
      D_{ik}(W_i^{n+1}\hspace{-1mm} - W_i^{n+2}),\label{eq:invar_sup3}
    \end{alignat}
  \end{subequations}
  for all $\{i,k\}\in\hat{\E}$ and $n\geq 0$ because the right-hand
  side of~\eqref{eq:prove_3} should be zero.  Let $\sink_0$ be
  the set of sink nodes in~$\hat{\graph}$. The primal variable update
  for $i\in\sink_0$ is given as
  \begin{subequations}
    \label{eq:iter_sink}
    \begin{alignat}{2}
      &W_i^{n+1} = \argmin_{W_i\in\W_i}f_i(W_i)\label{eq:iter_sink1} 
      \\
      \nonumber &\;\;+\hspace{-2mm}\sum_{\{k,i\}\in
        \hat{\E}}\hspace{-2mm}\Big( {p_{ik}^{n}}^\top
      G_{ki}(W_k^{n+1},W_i) +
      \frac{\rho_{ik}}{2}\|G_{ki}(W_k^{n+1},W_i)\|^2 \Big) 
      \\
      &\hspace{10mm}=
      \argmin_{W_i\in\W_i}f_i(W_i) \label{eq:iter_sink2}
      \\
      \nonumber &\hspace{10mm}+\hspace{-2mm}\sum_{\{k,i\}\in
        \hat{\E}}\hspace{-2mm}\Big(\frac{\rho_{ik}}{2}\|G_{ki}(W_k^{n+1},W_i)
      + \frac{p_{ik}^{n}}{\rho_{ik}}\|^2 \Big).
    \end{alignat}
  \end{subequations}
  Eq.~\eqref{eq:iter_sink2} follows by completing the squares inside
  the sum over $\hat{\E}$ in~\eqref{eq:iter_sink1} (we also drop the
  term ${{p_{ik}^n}^2}/{2\rho_{ik}}$ because this does not affect the
  argmin operation).
  % \margins{I see how the square of the term inside the sum in a.36b
  % gives you the two terms inside the sum of a.36a. And I suppose
  % that the term $\frac{{p_{ik}^n}^2}{2 \rho_{ik}}$ in a.36b does not
  % affect the argmin, right? I would then say that the equation a.36b
  % is derived by completing the squares inside the sum over
  % $\hat{\E}$ and noting that the argmin operation does not change by
  % the addition of the term $\frac{{p_{ik}^n}^2}{2 \rho_{ik}}$}
  Now we analyze how the last quadratic term in~\eqref{eq:iter_sink2}
  evolves as $n$ increases
  \begin{align}\label{eq:evol_n}
    & \frac{\rho_{ik}}{2}\|G_{ki}(W_k^{n+1},W_i) +
    \frac{p_{ik}^{n}}{\rho_{ik}}\|^2 \\
    & = \frac{\rho_{ik}}{2}\|D_{ki}(W_k^{n+1}) + D_{ik}(W_i) +
    \frac{p_{ik}^{n}}{\rho_{ik}}\|^2 \nonumber \\
    & = \frac{\rho_{ik}}{2}\|(D_{ki}(W_k^{n}) - r_{ki}^{n}) +
    D_{ik}(W_i) + \frac{p_{ik}^{n-1} + \rho_{ik}r_{ki}^{n}
    }{\rho_{ik}}\|^2 \nonumber
    \\
    & = \frac{\rho_{ik}}{2}\|G_{ki}(W_k^{n},W_i) +
    \frac{p_{ik}^{n-1}}{\rho_{ik}}\|^2. \nonumber
  \end{align} 
  We use~\eqref{eq:invar_sup3} and~\eqref{eq:lagran_update} in the
  second equality in~\eqref{eq:evol_n}, with node $k$ being the tail
  node as $\{k,i\}\in\hat{\E}$ % instead of
  % $\{i,k\}\in\hat{\E}$ in Eq.~\eqref{eq:invar_sup3} and
  % \eqref{eq:lagran_update}
  \footnote{Recall that $p_{ik}^n$ and $\rho_{ik}$ are
    non-directional, i.e., $p^n_{ik} = p^n_{ki}$ and $\rho^n_{ik} =
    \rho^n_{ki}$. However, $r^n_{ik} = G_{ik}(W_i^n,W_k^n) =
    D_{ik}(W_i^n) + D_{ki}(W_k^n)$ are directional.}. Note
  that~\eqref{eq:evol_n} holds only for $n\geq 1$
  because~\eqref{eq:invar_sup3} holds only for $n\geq 0$.  Due
  to~\eqref{eq:evol_n}, optimization~\eqref{eq:iter_sink} for node
  $i\in\sink_0$ does not change with respect to the iteration number
  as long as $n\geq
  1$. %\margins{this notation for sink nodes does not have subzero as
      %before. Explain why this applies to sink nodes only (say more
      %explicitly why for a non-sink node you can not use the same
      %argument right now)} \marginch{i add explanation why the
      %argument does not apply to non-sink node shortly after.}
  Hence,
  \begin{align}\label{eq:pre_invar}
    D_{ik}(W_i^{n+1} - W_i^{n}) = 0 \text{ and } r_{ki}^{n+1} = 0,
  \end{align}
  as a result of Eq.~\eqref{eq:invar_sup3}, for all $i\in\sink_0$ and
  $\forall n\geq 1$. Notice that we have \eqref{eq:pre_invar} only for
  $i\in\sink_0$ because if $i\not\in\sink_0$, \eqref{eq:iter_sink}
  includes additional terms for $\{i,k\}\in\hat{\E}$, for which
  results similar to~\eqref{eq:evol_n} are not
  available. 
  % \margins{shouldn't we be able to say from here that $W_i^{n+1} =
  % W_i^n$? After all, the solution is a single point, the quadratic
  % terms do no change and neither does the $f_i$
  % change. }\marginch{Not there yet, at this point we have
  % \eqref{eq:pre_invar} only for links connected to sink nodes, which
  % is a subset of $\hat{\E}$. }
	
  Next consider the subgraph of $\hat{\graph}$ induced by the set of
  vertices $\N\setminus\sink_0$, denoted as
  $\hat{\graph}[\N\setminus\sink_0]$. The graph
  $\hat{\graph}[\N\setminus\sink_0]$ may be composed of several
  disconnected subgraphs in general. Every subgraph of
  $\hat{\graph}[\N\setminus\sink_0]$ has at least one sink node
  because $\hat{\graph}[\N\setminus\sink_0]$ remains acyclic. Let
  $\sink_1$ be the set of sink nodes of
  $\hat{\graph}[\N\setminus\sink_0]$. The optimization on the primal
  variables of $i\in\sink_1$ can be written as
  \begin{align}\label{eq:iter_sink_source}
    &W_i^{n+1} = \argmin_{W_i\in\W_i}f_i(W_i)
    \\
    \nonumber &\;\;+\hspace{-2mm}\sum_{\{i,k\}\in \hat{\E}, k\in\sink_0}\hspace{-2mm}
    \frac{\rho_{ik}}{2}\|G_{ik}(W_i,W_k^n) + \frac{p_{ik}^n}{\rho_{ik}}\|^2 \\ \nonumber
    &\;\;+\hspace{-2mm}\sum_{\{k,i\}\in \hat{\E}}\hspace{-2mm}
    \frac{\rho_{ik}}{2}\|G_{ki}(W_k^{n+1},W_i) + \frac{p_{ik}^n}{\rho_{ik}}\|^2.
  \end{align}
  We can show that the quadratic-cost functional terms associated with
  $\{k,i\}\in\hat{\E}$ do not change for $n\geq 1$ because of
  Eq.~\eqref{eq:evol_n}, which holds for any $\{k,i\}\in
  \hat{\E}$. The quadratic-cost functional terms associated with links
  $\{i,k\}\in\hat{\E}$, for $k\in\sink_0$, do not change either for
  $n\geq 1$ because Eq.~\eqref{eq:pre_invar} holds for every
  $k\in\sink_0$ for $n\geq 1$.  Therefore, we have
  Eq.~\eqref{eq:pre_invar} satisfied for all $i\in\sink_1$ for every
  $n\geq 2$.  With similar arguments for the subgraph
  $\hat{\graph}[\N\setminus(\sink_0\cup\sink_1)]$, we can conclude
  that Eq.~\eqref{eq:pre_invar} holds for nodes in $\sink_2$ within
  finite iterations, where $\sink_2$ is the set of sink nodes of
  $\hat{\graph}[\N\setminus (\sink_0\cup \sink_1)]$. Repeating the
  induction, it follows that after a finite number of iterations,
  Eq.~\eqref{eq:pre_invar} holds for all $i\in\N$. The
  number of iterations required for Eq.~\eqref{eq:pre_invar} to hold
  is the diameter of the directed graph $\hat{\graph}(\N,\hat{\E})$.
  
  % \margins{I believe that you have proven that $W^{n+1} = W^n$ and
  % that $p^{n+1} = p^n$ for sufficiently large and finite
  % $n$. (though you don't state this, but you state that A.38 holds
  % only.) I must be missing something. }\marginch{yes, we can almost
  % conclude $W^{n+1} = W^n$ here. It's only a subtle difference
  % between A.40 and $W^{n+1} = W^n$, which is explained in the proof
  % of the theorem}
  % \margins{IN any case, how do you get from here to say that $V =0$?
  % I think these steps are in the theorem proof, but not here. Then,
  % I would rephrase the lemma to just state that eq A.38
  % holds. }\marginch{i have rephrased the lemma}
  In the reasoning above, we have shown that there exists 
  $n_0<\infty$ such that 
   \begin{align}
     \label{eq:converge_diff2}
     \|r^{n_0+1}_{ik}\| = \|D_{ki}(W_k^{n_0+1}\hspace{-1mm} -
     W_k^{n_0})\|^2 = 0,
   \end{align}
   holds for all $\{i,k\}\in\hat{\E}$. Notice that $r^{n_0+1} = 0$
   indicates the solution of~\eqref{eq:local_update_iter} for
   iteration $n_0+1$ is $W^\star_{\N}$. Thus, $W^{n_0+1}_{\N}=
   W^\star_{\N}$, and $p^{n_0+1} = p^{n_0}= p^\star$ because of strong
   duality. Therefore, we have $V^{n_0+1} =
   V(W^{n_0+1}_{\N},p^{n_0+1}) = V(W^{\star}_{\N},p^{\star}) = 0$. By
   definition of $\I_0$, $V$ remains constant throughout the trajectory starting from $(\W_{\N}^0,p^0)$. Therefore, we can conclude that the largest invariant set is 
   \begin{align}\label{eq:invar_pre}
	\setdef{(W_{\N},p)}{V(W_{\N},p)= 0}.
   \end{align}
   By LaSalle theorem, we next conclude that as $n\rightarrow\infty$
   \begin{align}\label{eq:converge}
   D_{ki}(W^n_k) \rightarrow D_{ki}(W_k^\star),\quad p^n_{ik} \rightarrow p_{ik}^\star,
   \end{align}
   for all $\{i,k\}\in\hat{\E}$. Combining~\eqref{eq:converge} and the optimization steps in~\eqref{eq:local_update}, we also have that for all $\{i,k\}\in\hat{\E}$
   \begin{align}\label{eq:converge2}
	D_{ik}(W^n_i)\rightarrow D_{ik}(W_i^\star),
	\end{align}   
	as $n\rightarrow\infty$. 
   The equalities~\eqref{eq:converge} and \eqref{eq:converge2} hold if and only if $(W^n_{\N},p^n) \rightarrow
   (W_{\N}^\star,p^\star)$, completing the proof.
\end{proof}

% In the proof of Theorem~\ref{thm:converge}, we have parts of the
% entries of $W^n_{\N}$ being zero to ensure the continuity of the
% mapping $F$. It is worth to point out that when $(W^n_{\N},p^n)$
% converges and if the optimum of \textbf{(P2)} has the rank being
% one, we can always retrieve the rank one optimum from the limit
% point of $W^n_{\N}$ by replacing the zero entries of $W^n_{\N}$ to
% values such that $\rank(W^\star_{i})=1$ for all $i\in\N$.

\begin{remark}\longthmtitle{Convergence rate of
    Algorithm~\ref{algo:pd}}\label{rem:convergence_rate}
  {\rm Characterizing the convergence rate of the
    scheduled-asynchronous algorithm is challenging. The main reason
    for this is the fact that, given the particular characteristics of
    the design of Algorithm~\ref{algo:pd}, the proof of
    Theorem~\ref{thm:converge} identifies a LaSalle function, and not
    a Lyapunov one. This is in contrast with the convergence proof of
    ADMM, cf.,~\cite{BH-XY:15}, where the availability of a Lyapunov
    function makes the convergence analysis and the rate
    characterization easier.  If the network topology is bipartite,
    Algorithm~\ref{algo:pd} reduces to ADMM and hence shares the
    $O(1/n)$ convergence rate established in~\cite{BH-XY:15}. Based on
    this and our simulation results, we conjecture that
    Algorithm~\ref{algo:pd} also has $O(1/n)$ convergence rate with a
    smaller constant.}\oprocend
\end{remark}
	
\begin{remark}\longthmtitle{Heterogeneous selection of
    parameter~$\rho$}\label{rem:para_selection} {\rm The convergence
    rate of Algorithm~\ref{algo:pd} depends on the parameter~$\rho$
    which weighs how the errors in the constraint satisfaction affect
    the evolution of the dual variables. Our simulation studies show
    that selecting $\rho_{ik}$ heterogeneously can improve the
    convergence rate dramatically. In particular, we observe that
    choosing $\rho_{ik}$ proportionally to the norm of the complex
    line admittance of edge $\{i,k\}$, $\|y_{ik}\|$ improves the
    convergence rate compared to selecting a uniform $\rho$, i.e.,
    $\rho_{ik} = \rho_0, \forall\{i,k\}\in\E$. The intuition behind
    this fact is that the network is more sensitive to the
    perturbation on the edges with larger admittances. To justify this
    point, consider an edge where $G_{ik}$ in~\eqref{eq:EqulConstG} is
    not zero, \begin{align}\label{eq:perturb_con}
      G_{ik}(W_i^t,W_k^t)=\delta_{ik}^t,
      \;\forall\{i,k\}\in\hat{\E}, \end{align} with $0 \neq
    \delta_{ik}^t\in\real^4$.  % Consider at time $t$ before the convergence, the terminal nodes $i$
    % and $k$ can view~\eqref{eq:perturb_con} as a $\delta_{ik}^t$
    % perturbation on the constraint, where $\delta_{ik}^t\in\real^4$.
    Due to~\eqref{eq:perturb_con}, $W_{\N}^t$ is not a feasible
    solution of \textbf{(P2)}.  Consider then the virtual network in
    Fig.~\ref{fig:virtual_network}, where nodes $i$ and $k$ are not
    physically connected and only the communication between them
    remains.  %
    \begin{figure} \centering%
      \includegraphics[width=0.35\textwidth]{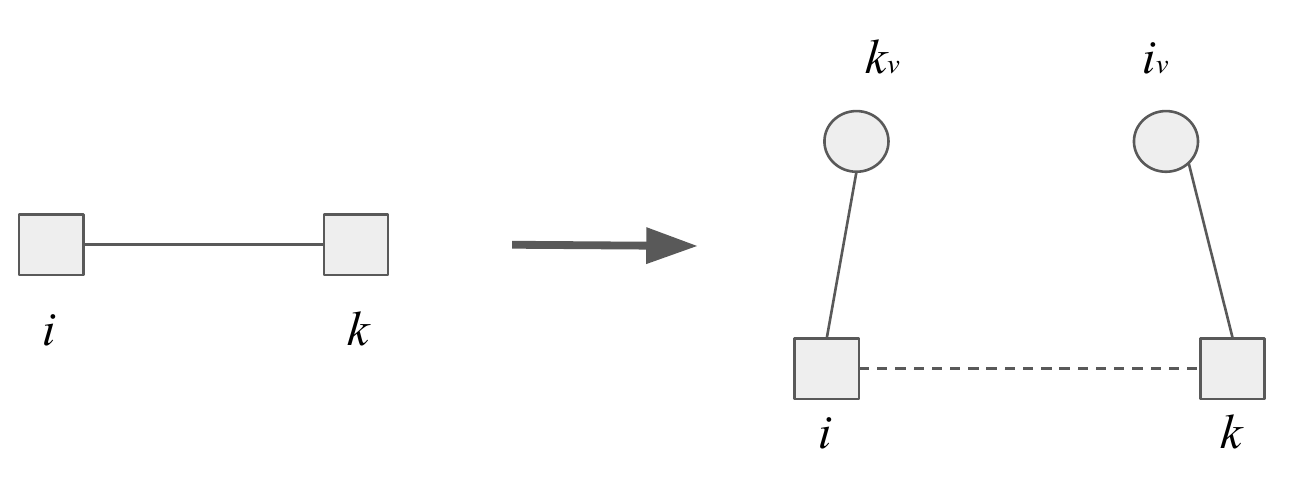} %
      \caption{Illustration of the virtual subnetwork with
        perturbation on the constraint $G_{ik}(W_i,W_k)=0$.  The solid
        lines represent the physically connected edge with
        communication. The dash line corresponds to a communication
        link.}\label{fig:virtual_network}
    \end{figure}
    Node $i$ is
    connected to a virtual node $k_v$ with admittance $y_{ik}$, and
    the same applies to $k$ and $i_v$. The communication between $i$
    and $k$ have $W_i$ and $W_k$ satisfy
    $G_{ik}(W_i^t,W_k^t)=\delta_{ik}^t$.
% Here, for the simplicity of analysis, we assume the second largest
% eigenvalue of every $W_i^t$ is small compared to its largest one so
% that the approximation $\rank(W_i^t)=\rank(W_k^t)=1$ is valid even
% $\delta^t_{ik}\neq 0$.
% Recall that $W_i$ encodes one copy of voltages for all nodes
% $l\in\N_i$.
    Let $V_{i\hat{i}}$ and $V_{i\hat{k}}$, respectively, denote the copy
    of the voltages of bus $i$ and $k$ contained in $W_i$. Assign to nodes
    $k_v$ and $i_v$ the voltages $V_{i\hat{k}}$ and $V_{k\hat{i}}$,
    respectively. In this way, the virtual network has the power flow from
    $k_v$ to $i$ given by
    \begin{align}\label{eq:kv_i}
      V_{i\hat{k}}\Big(y_{ik}(V_{i\hat{k}} - V_{i\hat{i}})\Big)^*.
    \end{align}
    On the other hand, the power flow from $k$ to $i_v$ is
    \begin{align}\label{eq:k_iv}
      V_{k\hat{k}}\Big(y_{ik}(V_{k\hat{k}} - V_{k\hat{i}})\Big)^*.
    \end{align}
    The copies of voltages of nodes $i$ and $k$ are related by
    \begin{align}\label{eq:v_volts}
      V_{i\hat{k}}^t = V_{k\hat{k}}^t + \hat{\delta}_{ik,k}^t, \quad
      V_{i\hat{i}}^t = V_{k\hat{i}}^t + \hat{\delta}_{ik,i}^t, 
    \end{align}
    where $\hat{\delta}^t_{ik,k} = \sqrt{|\delta_{ik}^t(1)|}
    \exp(j\angle{(\delta^t_{ik}(3) + j\delta^t_{ik}(4))})$ and
    $\hat{\delta}^t_{ik,i} = \sqrt{|\delta^t_{ik}(2)|}$ (assume
    $\angle{V_{i}}=0$ without loss of generality). If
    $\delta_{ik}^t=0$, then the values of~\eqref{eq:kv_i}
    and~\eqref{eq:k_iv} are the same because $V_{i\hat{k}} =
    V_{k\hat{k}}$ and $V_{i\hat{i}}=V_{k\hat{i}}$. Therefore, the
    virtual network is equivalent to the original one if
    $\delta_{ik}^t=0$. On the other hand, if the first two elements of
    $\delta_{ik}^t$ are non-zero, then $V_{i\hat{k}}^t\neq
    V_{k\hat{k}}^t$ and $V_{i\hat{k}}^t\neq V_{k\hat{i}}^t$ according
    to~\eqref{eq:v_volts}. In such case, the virtual network is not
    equivalent to the original one. Furthermore, we observe that for a
    given non-zero $\delta_{ik}^t$, the discrepancy between
    Eq.~\eqref{eq:kv_i} and \eqref{eq:k_iv} is proportional to
    $y_{ik}$.  This discrepancy has the interpretation of a
    perturbation on the apparent power flow for edge $\{i,k\}$.
  }\oprocend
\end{remark}

\section{Directed Graph Design}\label{sec:Direct_graph}

Here, we first describe how the average time between two consecutive
iterations under the scheduled-asynchronous algorithm for a node
depends on the diameter of the orientation of the network
graph. Motivated by this observation, we set out to find an acyclic
orientation with minimal diameter in a distributed fashion. In
general, finding such an orientation is equivalent to finding the
chromatic number, which is NP-hard. However, exploiting the planar
property of many power networks, we develop a distributed algorithm to
determine an orientation with small diameter.
	
\subsection{Relationship Between Graph Coloring and Diameter}  
The updating sequence of the scheduled-asynchronous algorithm depends
on the orientation $\hat{\graph} = (\N,\hat{\E})$.  To see this,
consider a path in $\hat{\graph}$, with $i$ being the initial node and
$k$ the end node. The update of $k$ requires that all other nodes in
the path finish at least an update before it, which are processed in
sequence starting from node $i$.  The ``waiting time'' of node $k$
associated to this path corresponds therefore to its length. Since the
diameter of the orientation bounds the length of all the paths, this
justifies finding an acyclic orientation which the smallest
diameter. We formalize this problem next.  Let $\Omega (\graph)$ be
the collection of all acyclic orientations of $\graph$.  We denote by
$\graph_{\omega}$ the directed graph corresponding to $\omega \in
\Omega (\graph)$.  The problem we aim to solve is % of finding an acyclic orientation that
% minimizes the diameter is
\begin{align}
  \label{eq:min_diameter}
  \omega^\star =
  \text{arg}\min_{\omega\in\Omega  (\graph)} \big( \max_{h\in\mathcal{P}_\omega}|h|
  \big),
\end{align}
where $\mathcal{P}_\omega$ is the set of paths in $\graph_\omega$, and
$|h|$ is the length of the path~$h$. The
optimization~\eqref{eq:min_diameter} is directly related to the
classical problem of finding the chromatic number $\X(\graph)$ of an
undirected graph $\graph$~\cite{RF-VB-NM-CS:08}. In fact, one has
\begin{align}
  \label{eq:diameter_chrom}
  \X(\graph) = 1 +  \min_{\omega\in\Omega  (\graph)} \big(
  \max_{h\in\mathcal{P}_\omega}|h| \big).
\end{align}
% Based on Eq.~\eqref{eq:diameter_chrom}, the problem is reduced to
% finding the chromatic number $\X(\graph)$.  However, 
In general, computing $\X(\graph)$ is NP-hard, see
e.g.~\cite{AS:89}. There are only approximate algorithms to find a
solution, see for example~\cite{DC-BG:73}.  However, for planar
graphs, the chromatic number is upper bounded by
four~\cite{NR-DS-PS-RT:97} and, furthermore, there exists a
quadratic-time algorithm to find a graph-coloring with four colors,
cf.~\cite{NR-DS-PS-RT-96}.  Fortunately, the following assumption
holds for most electrical networks.
\begin{assumption}\longthmtitle{Planar network topology~\cite{KCS:15}}
  \label{assum:topo}
  Electrical networks have simple planar network topology.
\end{assumption}
If a centralized entity has information on the network topology, then
the algorithm in~\cite{NR-DS-PS-RT-96} can be run to assign a number
(or color) to every agent. % If the network topology recognized by the
% centralized entity is not far-off from the actual one, then the
% acyclic graph defined in this way is near optimal.
This procedure might be problematic in large-scale scenarios,
specially in the presence of plug-and-play devices that easily change
the network topology. This motivates our design of a distributed
algorithm.

\subsection{Distributed Orientation Computation with Small Diameter} 

The following distributed algorithm from~\cite{SG-MHK:93} finds an
orientation of an arbitrary planar graph with diameter bounded by five
(or equivalently $\X(\graph)\leq 6$):
\begin{enumerate}
\item find an acyclic orientation in a distributed way such that every
  node has out-degree at most five.
\item every node chooses a color that is different from all the
  out-neighboring nodes defined in step 1).
\end{enumerate}
Note that any ordering of the colors induces an orientation of the
graph, which is furthermore acyclic,
cf. Remark~\ref{rem:acyclic_graph}.
% Note that the out-degree is defined for directed graphs, which is
% distinguish from the concept of degree for undirected graphs. 
The algorithm above is simple to implement but may be conservative for
electrical networks because, in general, the degree of most nodes is
far less than six. In fact, empirical
studies~\cite{PH-SB-ECS-CB:10,RA-IA-GLN:04} have shown that electrical
networks have a degree distribution that follows the exponential
distribution. Since the degree takes integer values, it is more
appropriate to characterize the degree distribution with the geometric
distribution -- the discrete analogy of the exponential distribution.

\begin{assumption}\longthmtitle{Geometric degree
    distribution}\label{assum:exp_deg_dist}
  In electrical networks, the number of nodes with degree at least
  $d_0\in\Pint$ satisfies
  \begin{align}
    \label{eq:deg_dist}
    \text{Prob}(d = d_0)= \lambda(1-\lambda)^{d_0}, 
  \end{align}
  with parameter $0\leq\lambda\leq 1$.
\end{assumption}
% %
% \marginJC{So different electrical networks have different $\lambda$?
%   Can you specify the right lambdas for ieee 14, 30, 57?}
% \marginch{yes different networks have different $\lambda$. I tried to
%   curve fitting IEEE 14 30 57 examples with geometric
%   distribution. The best values of lambda are 0.23, 0.27, and 0.24,
%   but the errors of the curve fitting are not small. The errors mainly
%   come from $d=1$ and $d=2$. The degree follows geometric distribution
%   very well for $d>=2$, but the number of nodes with $d=1$ is far less
%   than the number for $d=2$, making geometric (or exponential
%   distribution) not very competent. Discrete Weibull distribution fits
%   the data better.}
% %
Most nodes have small degree due to~\eqref{eq:deg_dist}. According
to~\cite{PH-SB-ECS-CB:10,WD-WL-XC-QAW:11}, the average degree of
empirical electrical networks is between $2$ and $3$. Many electrical
networks, as a result, have the chromatic number far less than
six. For example, IEEE 14, 30 and 57 bus test cases have $\X(\graph)
= 3$.  Therefore, it is conceivable that one can find an orientation
with diameter less than five. Motivated by this observation, we modify
the algorithm in~\cite{SG-MHK:93} to exploit the geometric degree
distribution property.

We propose Algorithm~\ref{algo:degree} to find a preliminary
orientation.
% Algorithm~\ref{algo:degree} operates under
% Assumptions~\ref{assum:topo} and~\ref{assum:exp_deg_dist} so that it
% may find a initiatory orientation with the out-degree less than one
% of the algorithm in~\cite{SG-MHK:93}.
The idea is to first try to find an orientation with the out-degree of
all nodes no bigger than 2.  If this is not possible, then the
strategy searches instead for an orientation with one more
out-degree. If necessary, this process is repeated until
Algorithm~\ref{algo:degree} eventually finds an orientation where the
out-degree of all nodes is at most five.

\begin{algorithm}[h!]
  \caption{}
  \begin{algorithmic}[1]
    % \Inputs {System matrices: $A,B,K$}
    \Initialize {For all $i\in\N$, %$\xi_k=1$, \\
      set $\eta_{i}$ s.t. $\eta_{i} \neq \eta_{k} \;\forall \{i,k\}\in\E$ \\		
      $m_{i} \leftarrow 0$, \quad $\bar{h}_{i} \leftarrow \bar{h}^0$ }
    \State \textbf{For every $i\in\N$: }
    % \While{$\xi_k = 1$}
    \State \quad\textbf{Update} $\N_{\eta_i}:=\{k\in\N_i |
    \eta_{k} > \eta_{i} \}$
    \State\quad\textbf{If} {$|\N_{\eta_i}| \geq
      \bar{h}_{i}$ }
    \State\quad\quad\textbf{If} $\bar{h}_{i} = 6$
    \textbf{or} $m_{i} \leq \bar{m}$
    \State\quad\quad\quad $\eta_{i} =
    \max_{k\in\N_{\eta_i}}\eta_{k}+1$
    \State\quad\quad\quad $m_{i} \leftarrow m_i + 1$
    \State\quad\quad\quad Send $\eta_i$ to neighbors
    $\N_i$
    \State\quad\quad\textbf{else if } $m_{i} > \bar{m}$
    \textbf{ and } $\bar{h}_{i}<6$
    \State\quad\quad\quad $m_{i} \leftarrow 0$, \quad
    $\bar{h}_{i} \leftarrow\bar{h}_{i} + 1$ 
    \State\quad\quad\textbf{end}
    \State\quad\textbf{end} 
    % \EndWhile
    % \Outputs {optimality $\gamma_k$}
  \end{algorithmic}
  \label{algo:degree}
\end{algorithm}

We next explain the pseudocode of Algorithm~\ref{algo:degree}. Every
node $i$ starts with an initial number $\eta_i$ such that
$\eta_i\neq\eta_k$, for all $\{i,k\}\in\E$. These numbers induce an
acyclic orientation by declaring that node $i$ is a tail of $\{i,k\}$
if $\eta_i < \eta_k$, cf. Remark~\ref{rem:acyclic_graph}.
% The algorithm updates $\eta_i$, $i\in \N$, until no
% longer necessary, which results into a particular graph orientation
% and out-degree, $|\N_{\eta_i}|$, for each node.
Under the algorithm, every node $i$ % computes  its
% out-degree and
recursively updates $\eta_i$ if its current out-degree is bigger than
or equal to a number $\bar{h}_i$, initially set to $\bar{h}^0 = 2$,
for all~$i$. The update of $\eta_i$ follows a simple rule to choose a
number bigger than $\eta_k$, for all $k\in\N_i$. In this way, node $i$
becomes a sink node with out-degree zero in the new induced
orientation. Node $i$ then sends $\eta_i$ to all $k\in\N_i$ for them
to recompute their out-degree. If all nodes do not require any further
update in their $\eta$, Algorithm~\ref{algo:degree} converges to an
orientation in which all nodes have an out-degree of at most two. If,
instead, some nodes require an update for more than $\bar{m}>1$ times,
then the strategy has these nodes increase its $\bar{h}_i$ by one
(since an orientation with out-degree less than two might not
exist). Any node $i$ that again updates its variable $\bar{m}$ times
has $\bar{h}_i$ increased in a similar way. The procedure repeats
until every node stops updating.

The following result establishes the convergence properties of
Algorithm~\ref{algo:degree}.

\begin{proposition}\longthmtitle{Convergence of
    Algorithm~\ref{algo:degree}}
  \label{prop:outdegree}
  Algorithm~\ref{algo:degree} converges with $\bar{h}_i\leq 6$ for all
  $i\in\N$ in a finite number of iterations.
\end{proposition}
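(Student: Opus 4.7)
The first part of the claim, that $\bar{h}_i\leq 6$ for all $i\in\N$, holds as a straightforward loop invariant of Algorithm~\ref{algo:degree}. Indeed, $\bar{h}_i$ is initialized at $\bar{h}^0 = 2$ and the update $\bar{h}_i\leftarrow \bar{h}_i + 1$ on line~10 is only executed under the explicit guard $\bar{h}_i<6$. Hence $\bar{h}_i\in\{2,3,4,5,6\}$ throughout the execution and is incremented at most four times at any given node.

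The structural ingredient I plan to exploit for the finite-time termination claim is Assumption~\ref{assum:topo} (planarity). By Euler's formula, every simple planar graph satisfies $|\E|\leq 3|\N|-6$, and such graphs are therefore 5-degenerate: one can order the vertices $v_1,\dots,v_N$ so that every $v_j$ has at most five neighbors in $\{v_{j+1},\dots,v_N\}$. Assigning $\eta_{v_j}=j$ yields an acyclic orientation with maximum out-degree at most five, which certifies that the update guard $|\N_{\eta_i}|\geq \bar{h}_i$ can be simultaneously falsified at every node once $\bar{h}_i$ reaches $6$.

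The termination argument then proceeds in two stages. First, since the profile $(\bar{h}_i)_{i\in\N}$ is component-wise non-decreasing and capped at~$6$, it stabilizes after at most $4|\N|$ increments in total. Second, once every $\bar{h}_i$ has attained its final value, I would argue that no infinite sequence of $\eta$-updates is possible, by a layered sink decomposition of the induced orientation analogous to the one used in the proof of Theorem~\ref{thm:converge}. Sinks of the current orientation have out-degree zero and hence cannot satisfy the update guard; once their $\eta$-values freeze, the same reasoning applies to the sinks of the subgraph induced by the unfrozen nodes, and the induction continues until~$\N$ is exhausted, with 5-degeneracy guaranteeing that such a decomposition exists.

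The main obstacle will be making this layered ``freezing'' argument rigorous, since an update of node $i$ flips the orientation of every incident edge and the sink set is therefore not monotone in time. The two mechanisms that jointly rule out infinite cascades are (a) the counter-guard $m_i\leq \bar{m}$, which caps the number of updates that $i$ can perform under a fixed $\bar{h}_i$ before triggering an increment, and (b) 5-degeneracy, which forbids the existence of any configuration in which $\bar{h}_i=6$ for all $i$ and some node still has out-degree at least~$6$. A cleaner alternative route is to reduce to the correctness of the base algorithm in~\cite{SG-MHK:93}, which corresponds to Algorithm~\ref{algo:degree} with $\bar{h}_i\equiv 6$ from the outset, and argue that the adaptive schedule for $\bar{h}_i$ driven by~$\bar{m}$ only introduces finitely many additional updates before it reduces to the base algorithm.
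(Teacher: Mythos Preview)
Your alternative route—observing that the $\bar{m}$-counter forces each node through at most finitely many $\eta$-updates before its $\bar{h}_i$ reaches~$6$, and then reducing to the base case $\bar{h}_i\equiv 6$—is precisely the paper's approach, and it is the clean path.

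The gap lies in your primary termination argument for that base case. You propose to peel off \emph{sinks of the current directed orientation}, and you rightly flag that this is unstable: a sink ceases to be a sink as soon as a neighbor updates, so there is no monotone frozen set to induct on. The paper sidesteps this by peeling instead by \emph{undirected} degree, exploiting exactly the $5$-degeneracy you already invoked. Since out-degree is always bounded by undirected degree, a vertex of undirected degree at most~$5$ satisfies $|\N_{\eta_i}|\leq 5<6$ permanently, regardless of what the rest of the network does, and is therefore frozen from the outset. Removing it leaves a planar subgraph, which again contains such a vertex; its only possible additional neighbors in the full graph are already frozen, so it too stops updating after finitely many steps, and the induction continues. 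You had the degeneracy ordering $v_1,\dots,v_N$ in hand but used it only to certify \emph{existence} of a low-out-degree orientation; redirecting it to drive the freezing induction closes the proof without any need for the sink machinery.
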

\begin{proof}
  Because $\bar{m}<\infty$, it is sufficient to show that every node
  stops updating $\eta_i$ in a finite number of steps if
  $\bar{h}_i=6$, $\forall i\in\N$. Since every simple planar graph has
  at least one node with degree strictly less than six, this node
  stops updating $\eta_i$ as $|\N_{\eta_i}|\leq 5 <\bar{h}_i$.  We
  then consider the subgraph of $\graph$ induced from
  $\N\setminus\{i\}$. This subgraph is also planar so we can find
  another node that will stop updating $\eta$ after a finite number of
  steps. The result follows by repeating this argument.
\end{proof}

With the orientation induced by the $\eta_i$ variables resulting from
the algorithm, one can find a coloring of the graph with $\bar{h}:=
\max_{i\in\N}\bar{h}_i$ colors.  Given the set of numbers $C:=\{1, 2,
\dots,\bar{h} \}$, Algorithm~\ref{algo:diameter},
from~\cite{SG-MHK:93}, assigns a number in~$C$ to each $i\in\N$.
% The proof of the convergence of
% Algorithm~\ref{algo:diameter} can be found in~\cite{SG-MHK:93}.
The resulting acyclic orientation has diameter at most $\bar{h}-1$ due
to~\eqref{eq:diameter_chrom}. 

\begin{algorithm}
  \caption{}
  \begin{algorithmic}[1]
    % \Inputs {System matrices: $A,B,K$}
    \Initialize {For all $k\in\N$, set $\zeta_k\in
      \{1,2,\dots,\bar{h}_k\}$ }
    \State \textbf{For every $k\in\N$, }
    \State\quad\textbf{Update} $\N_{\zeta_k}:=\{i\in\N_k | \zeta_i =
    \zeta_k \text{ and } \eta_i>\eta_k \}$
    \State\quad\quad\textbf{If}
    $\N_{\zeta_k}\neq \emptyset$ 
    \State\quad\quad\quad Choose $\zeta_k \in
    \{1,2,\dots,\bar{h}_k\}\setminus\{\zeta_i| i\in \N_{\zeta_k}\}$
    \State\quad\quad\textbf{end}
    \State \quad \textbf{Send} $\zeta_k$
    to $i\in\N_k$
    % \Outputs {optimality $\gamma_k$}
  \end{algorithmic}
  \label{algo:diameter}
\end{algorithm}
% Notice that any node $i$ with degree one never change its $\eta_i$
%
% \marginJC{Be precise! $\eta$ does not exist by itself. You mean that
%   a node $k$ with degree 1 never changes its $\eta_k$, no? }
%
% in Algorithm~\ref{algo:degree} and the corresponding $\bar{h}_i$
% remains at three. Define $\D_3 = \{i\in\N|\bar{h}_i = 3\}$.
% It is straightforward to see that the vertex-induced subgraph of
% $\graph$, $\graph[\D_3]$, has all the paths with length at most
% two. Since $\D_3$ contains many nodes in a typical electrical
% network due to Assumption~\ref{assum:exp_deg_dist}, and the small
% average degree of the network many paths in the original network
% have length bounded by two. We can further define $\D_4 =
% \{i\in\N|\bar{h}_i = 4\}$ and state that all the paths in
% $\graph[\D_4]$ have length at most three. Similar arguments apply
% for $\D_5 = \{i\in\N|\bar{h}_i = 5\}$. The result is sharper than
% directly implementing the algorithm in~\cite{SG-MHK:93} which only
% guarantees that every path has a length at most five.
%
	
In Algorithm~\ref{algo:degree}, setting $\bar{m}$ too small can be
conservative because the strategy gives up too early in finding
orientations with small diameter by rapidly increasing $\bar{h}_k$
toward six. On the other hand, setting $\bar{m}$ too large slows down
the convergence. The challenge lies then in characterizing the value
$\bar{m}$ that strikes a balance between maximizing the convergence
rate and minimizing the average path length of the resulting
orientation.  The optimal choice of $\bar{m}$ depends on the size of
the network and the constant $\lambda$ in the geometric degree
distribution. Theorem~\ref{thm:subgraph_degree} illustrates how
$\bar{m}$ is related to these factors.

\begin{theorem}\longthmtitle{On the degree of subgraphs and
    convergence of
    Algorithm~\ref{algo:degree}}\label{thm:subgraph_degree}
  If $\bar{h}_i^0 = c_0 <6$, for all $i\in\N$ and $\bar{m} = \infty$,
  then Algorithm~\ref{algo:degree} converges if and only if every
  vertex-induced subgraph of $\graph$ has at least one node with
  degree less than $c_0$.
\end{theorem}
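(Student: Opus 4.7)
The plan is to prove the biconditional by handling each direction separately, with the key observation that $\bar m = \infty$ forces $\bar h_i$ to remain fixed at $c_0$ for every $i\in\N$ throughout the execution, so convergence is equivalent to eventually reaching an orientation in which every node's out-degree is strictly less than $c_0$.

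For the forward direction, I would assume Algorithm~\ref{algo:degree} converges and take the terminal $\eta$-values. Because the update rule preserves the invariant $\eta_i\neq \eta_k$ for every $\{i,k\}\in\E$, the terminal assignment induces an acyclic orientation in which each node has out-degree $<c_0$. Given any vertex-induced subgraph $\graph[S]$, I would pick $v\in S$ minimizing $\eta_v$; every $\graph[S]$-neighbor of $v$ then has strictly larger $\eta$, so it is an out-neighbor of $v$ in the induced orientation of $\graph$. Therefore $\deg_{\graph[S]}(v)$ is bounded above by the out-degree of $v$ in $\graph$, which is strictly less than $c_0$.

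For the reverse direction, I would argue by contradiction: assume every induced subgraph has a node of degree $<c_0$ but some execution of Algorithm~\ref{algo:degree} fails to terminate. Let $S$ be the (nonempty) set of nodes that update infinitely often, and observe that $\eta_v\to\infty$ for each $v\in S$ while $\eta_w$ is eventually constant for $w\in\N\setminus S$. The key step is then to note that past a sufficiently late time, every edge between $v\in S$ and $w\in N(v)\setminus S$ is oriented with $v$ as the head, so all of $v$'s out-neighbors lie in $N(v)\cap S$. Since the update condition $|\N_{\eta_v}|\ge c_0=\bar h_v$ triggers infinitely often, we obtain $|N(v)\cap S|\ge c_0$ for every $v\in S$. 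This means every vertex of the induced subgraph $\graph[S]$ has degree at least $c_0$, contradicting the hypothesis.

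The main obstacle I anticipate is the reverse direction: I need to argue carefully that for $v\in S$, the ``eventually all cross edges are incoming'' property truly kicks in before the next infinite family of updates of $v$, and that this forces all witnesses of the out-degree condition to lie in $S$ itself. Establishing the invariant $\eta_i\neq \eta_k$ along all updates, and tracking the monotonicity $\eta_v\to\infty$ for $v\in S$, should be enough to make this argument rigorous without further combinatorial machinery.
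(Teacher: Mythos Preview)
Your proposal is correct, but it proceeds differently from the paper in \emph{both} directions.

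For the implication ``converges $\Rightarrow$ subgraph property,'' the paper argues the contrapositive: given an induced subgraph in which every vertex has degree at least $c_0$, it repeatedly invokes the existence of a source node in the (acyclic) orientation restricted to that subgraph to show that some vertex must update, and hence updates occur infinitely often. Your direct argument---read off the terminal $\eta$-values, pick the minimizer in any $S$, and bound its $\graph[S]$-degree by its global out-degree---is shorter and avoids tracking the dynamics.

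For the implication ``subgraph property $\Rightarrow$ converges,'' the paper gives a direct peeling argument: let $\St_1$ be the vertices of degree $<c_0$ in $\graph$ (these never update), let $\St_2$ be the vertices of degree $<c_0$ in $\graph[\N\setminus\St_1]$ (these update at most once), and so on; the hypothesis guarantees each layer is nonempty, so the peeling exhausts~$\N$. Your argument is again the contrapositive: the set $S$ of infinitely-updating nodes has $\eta_v\to\infty$ on $S$ and eventually-constant $\eta_w$ off $S$, so eventually all out-neighbors of each $v\in S$ lie in $S$, forcing $\deg_{\graph[S]}(v)\ge c_0$. This is clean and the obstacle you flag is easily handled (each update strictly increases $\eta_v$, so the cross-edge orientation stabilizes after finitely many steps).

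One thing the paper's peeling proof buys that yours does not: the layered sets $\St_1,\St_2,\dots$ are used in the discussion immediately following the theorem to reason about how the choice of $\bar m$ interacts with the degree distribution. Your contradiction argument establishes the theorem but does not produce this stratification.
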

\begin{proof}
  We first show the implication from right to left.  We term every
  node $k$ with degree less than $c_0$ ``stable'', because it will not
  update its $\eta_k$ regardless of the change of $\eta$ of any other
  node. Let $\St_1$ be the set of stable nodes of $\graph$. We then
  consider the induced subgraph $\graph[\St_2]$, where
  $\St_2=\N\setminus\St_1$. The graph $\graph[\St_2]$ also has at
  least one node with degree less than $c_0$. Again, nodes in
  $\graph[\St_2]$ with degree less than $c_0$ are called ``stable''
  because they will not change their $\eta$ in response to the change
  of $\eta$ of any other node in $\St_2$. Every stable node $i$ in
  $\graph[\St_2]$ updates $\eta_i$ at most once. The reason of the
  single update is the following. First, a stable node $i$ in $\St_2$
  is connected to at least one node in $\St_1$. Otherwise, node $i$ is
  in $\St_1$.
  % Recall that all the nodes in $\St_1$ never change $\eta$.
  Hence, once a stable node $i\in\St_2$ updates $\eta_i$, its
  out-degree, $|\N_{\eta_i}|$, is at most $\bar{h}_i-1$ and the node
  will not update $\eta_i$ again. We can reason with $\St_3, \St_4,
  \cdots, \St_s$ in a similar way until $\cup_{\alpha
    =1,2,\cdots,s}\St_\alpha = \N$ (note that $s<\infty$ because
  $N<\infty$) % Repeat the arguments of the stable nodes for all
  % $\St_i$, $i = 1,2,\cdots,s$.
  and conclude that Algorithm~\ref{algo:degree} converges in finite
  time.
  
  Next, we show the implication from left to right. If there exists a
  vertex-induced subgraph with all nodes having degree at least $c_0$,
  then we can show that at least one node in the subgraph updates
  $\eta$ infinitely often. Recall that every acyclic orientation has a
  source node. The source of the subgraph updates its $\eta_i$ because
  $|\N_{\eta_i}|\geq c_0$. After this update, the orientation of the
  subgraph remains acyclic because it is induced by the values of the
  variables~$\eta_i$. % with every pair of connected nodes have different
  % $\eta$.  
  As a consequence, at least one node of the subgraph becomes a source
  and makes an update. The sequence repeats for infinite times because
  there always exists one node in the subgraph with $|\N_{\eta_i}| \ge
  c_0$.  Since the number of nodes is finite, there exists at least
  one node that updates $\eta$ infinitely often. As a consequence,
  Algorithm~\ref{algo:degree} does not converge.
\end{proof}

Theorem~\ref{thm:subgraph_degree} provides insight into the selection
of $\bar{m}$ in Algorithm~\ref{algo:degree}.  With the notation of the
proof, if we set $\bar{m} = \infty$ and Algorithm~\ref{algo:degree}
converges with $\bar{h}^0_i = c_0 <6$, for all $i\in\N$, the nodes in
$\St_\alpha$ update their variables at least the number of times that
nodes in $\St_\beta$ do, for $\beta<\alpha$, because $\St_\alpha$
becomes ``stable'' after $\St_\beta$ does.  If, instead, we choose a
finite $\bar{m}$, then only a subset of nodes of $\N$ keep their
variable $\bar{h}_i$ non-increasing, while the remaining nodes are
forced to increase it, resulting in a more conservative upper bound of
their out-degrees. The number of updates required for the last node
being stable highly depends on the network topology. If $\lambda$ is
large, then most nodes have degree one or two,
%
%\marginJC{Again, i don't understand this statement without knowing what $d_0$ is}%\marginch{clarified earlier}
%
and $\St_1$ contains most nodes in $\N$. In this case, we can expect
$\cup_{\alpha=1,\cdots,s}\St_\alpha = \N$ with a small $s$, so
selecting a small $\bar{m}$ still provides a small $\bar{h}$ with fast
convergence time.
% The subgraph induced from $\N\setminus\St_1$ is likely to be
% disconnected because they are cut by $\St_1$.  Each disconnected
% subgraph has at least one node being stable if
% Algorithm~\ref{algo:degree} converges with the given $\bar{h}_i$.
% Since each disconnected subgraph only has the number of nodes much
% smaller to $|\St_1|$, the stable nodes can propagate to the entire
% subgraph with only small $s$.
Our experience shows that selecting $\bar{m}$ around $10$ is
sufficient to yield an optimal coloring for electrical networks. The
number may increase for some large networks that involve thousands of
nodes.
	
\section{Simulations}\label{sec:Simulation}
Here, we validate the performance of the scheduled-asynchronous
algorithm over the six bus test cases in~\cite{AJW-BW:96}, IEEE 14,
30, and 57 bus test cases.  We first use
Algorithms~\ref{algo:degree}-\ref{algo:diameter} to find an acyclic
orientation of each test case, cf. Table~\ref{tab:ori}.
% Since the network topology
% is relatively static compared to other uncertainties in electrical
% grids such as load uncertainties, Algorithms~\ref{algo:degree} and
% \ref{algo:diameter} can be implemented over a separated time scale
% from that of the scheduled-asynchronous algorithm. Finding an
% orientation before implementing Algorithm~\ref{algo:pd} is therefore
% reasonable.
\begin{table}[h]
  \begin{center}
    \caption{Simulation parameters and results of Algorithm~\ref{algo:degree}.}
    \begin{tabular}{ || c| c | c| c | c  || }
      \hline
      & $\bar{m}$ & $\bar{h}^0$ & Final $\bar{h}$ & \begin{tabular}{@{}c@{}}Diam. of \\ acyc. ori. \end{tabular}  \\
      \hline
      6 bus  & 10 & 2 & 4 & 3  \\ \hline
      14 bus  & 10 & 2 & 3 &  2  \\ \hline
      30 bus   & 10 & 2 & 3  &  2 \\ \hline
      57 bus   & 10 & 2 & 3  &  2 \\ \hline
    \end{tabular}
    \label{tab:ori}
  \end{center}
\end{table}
This results in orientations with diameter two for all test cases
except the six bus test cases, with diameter three.

% We observe that the number of iterations per node grows only
% linearly with respected to the number of edges in the electrical
% network. The property shows a sub-linear rate of convergence.

\begin{figure*}
  \centering 
  \subfigure[Six bus test
  case]{\includegraphics[width=.49\linewidth]{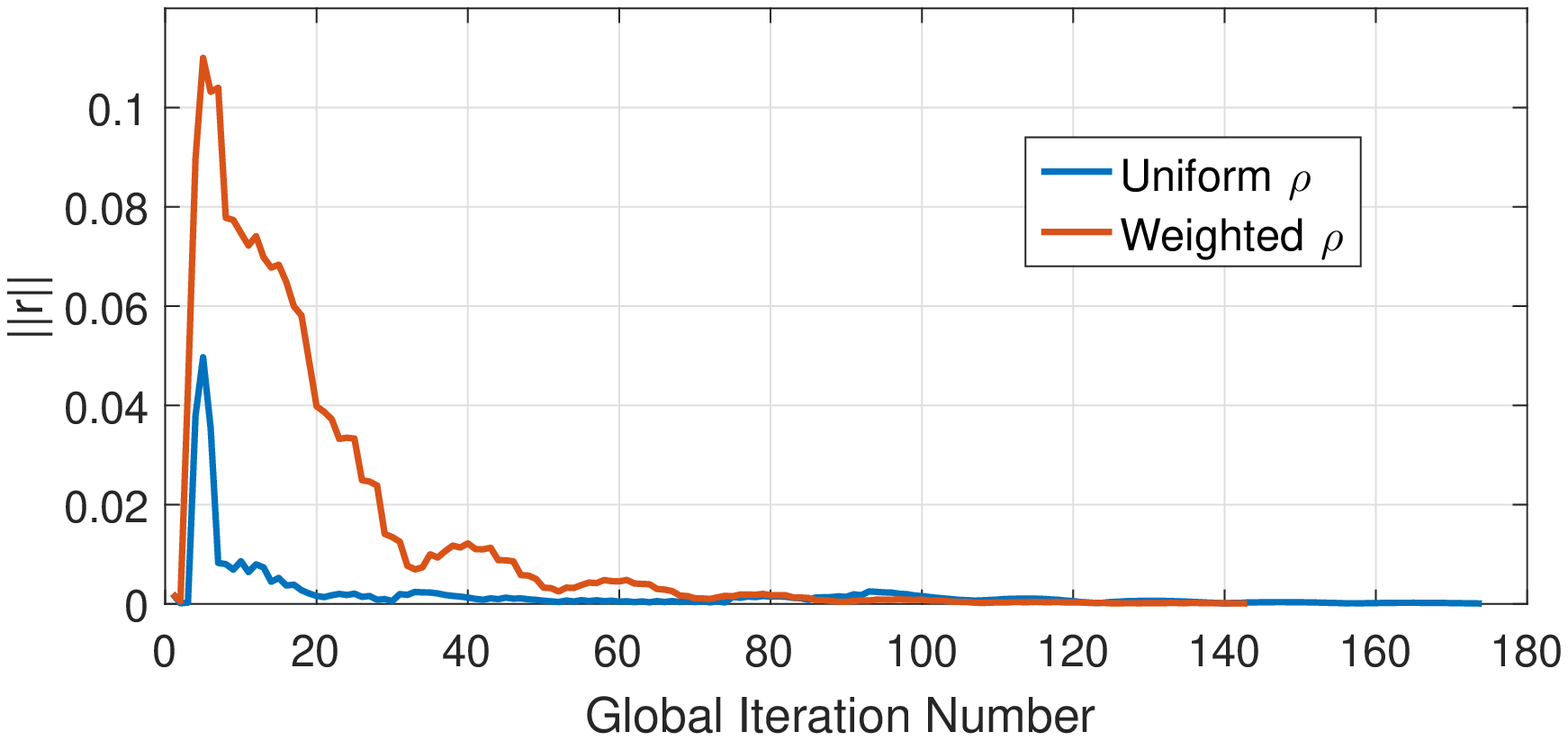}}\label{fig:6bus}
  \subfigure[IEEE 14 bus test
  case]{\includegraphics[width=.49\linewidth]{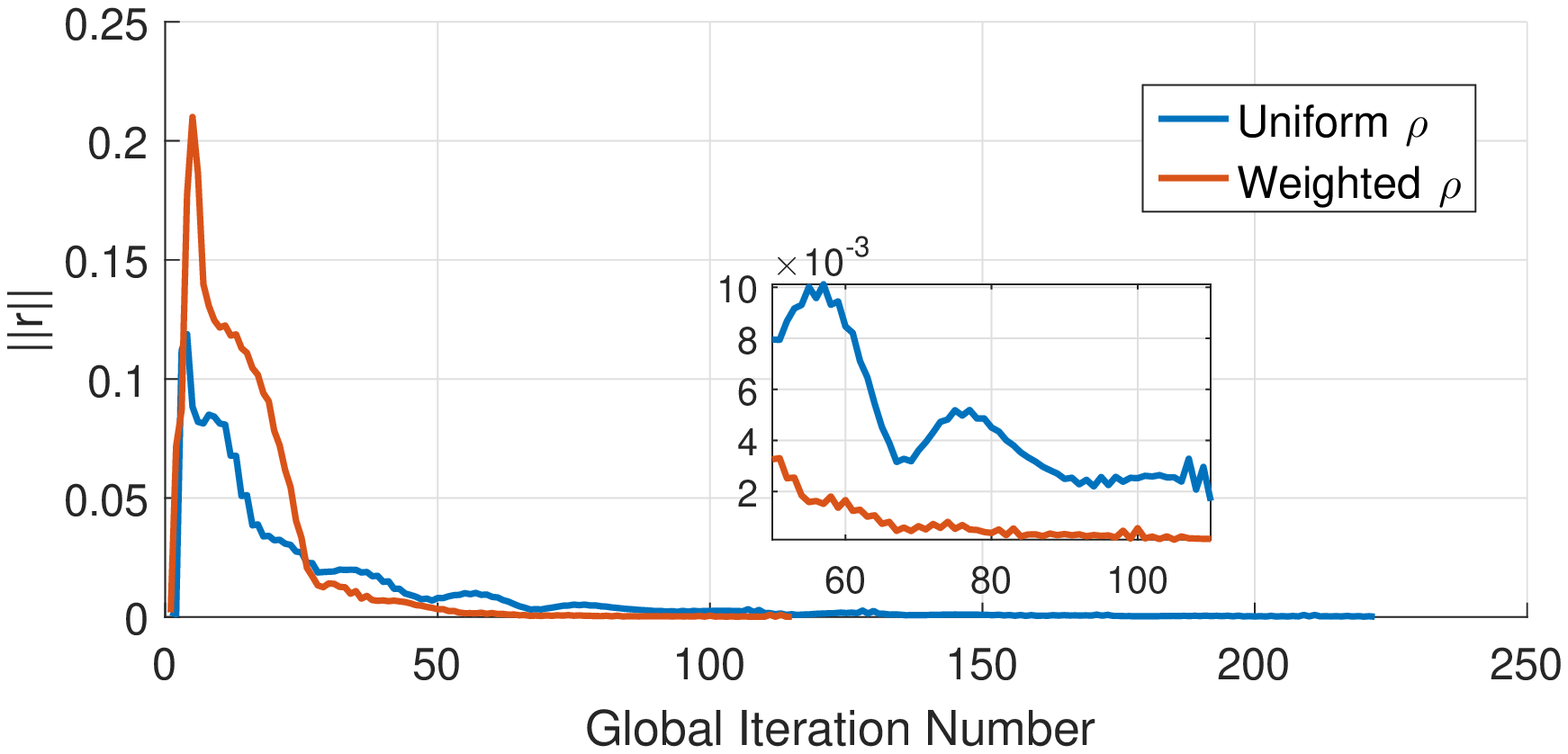}}\label{fig:14bus}
  \\
  \subfigure[IEEE 30 bus test
  case]{\includegraphics[width=.49\linewidth]{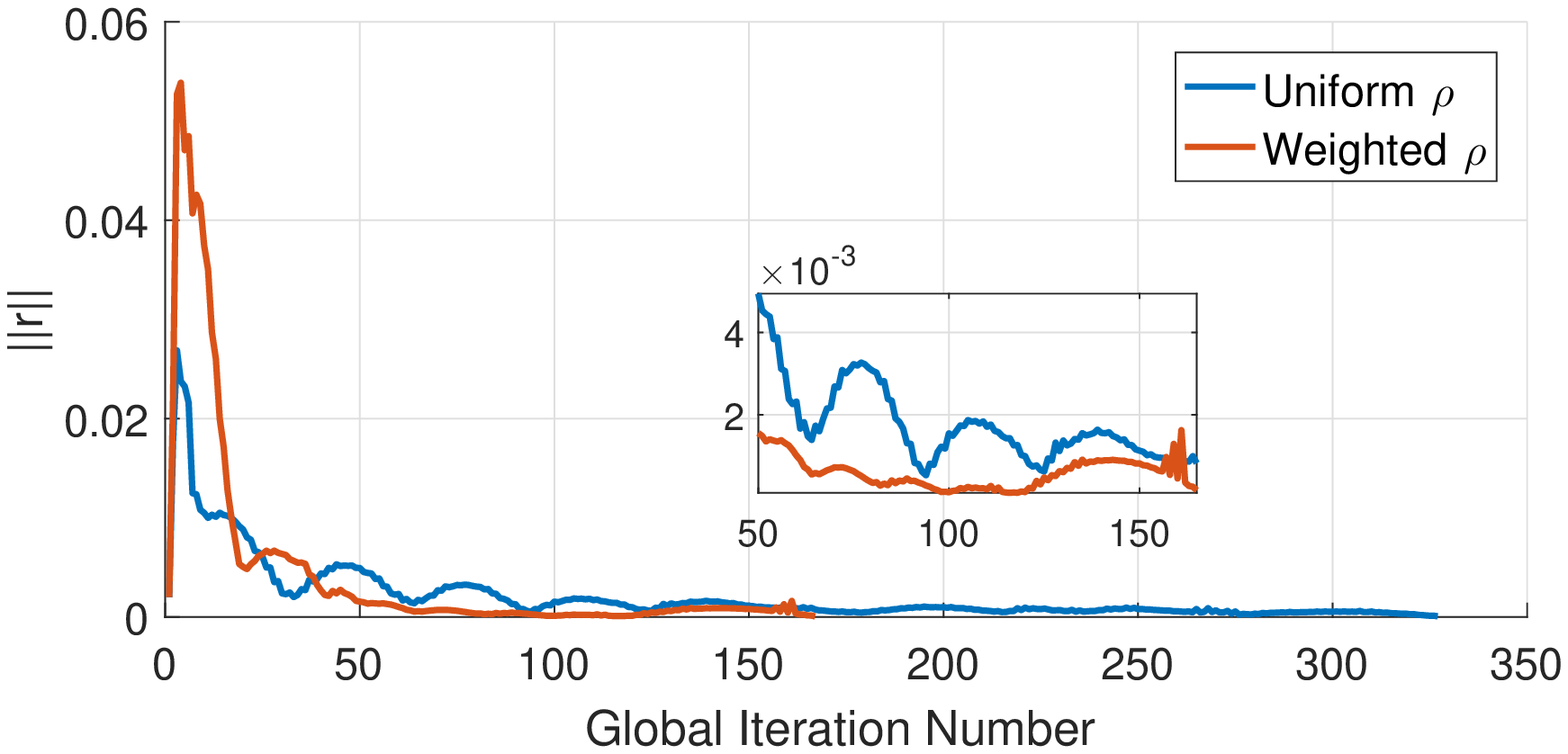}\label{fig:30bus}}
  \subfigure[IEEE 57 bus test
  case]{\includegraphics[width=.49\linewidth]{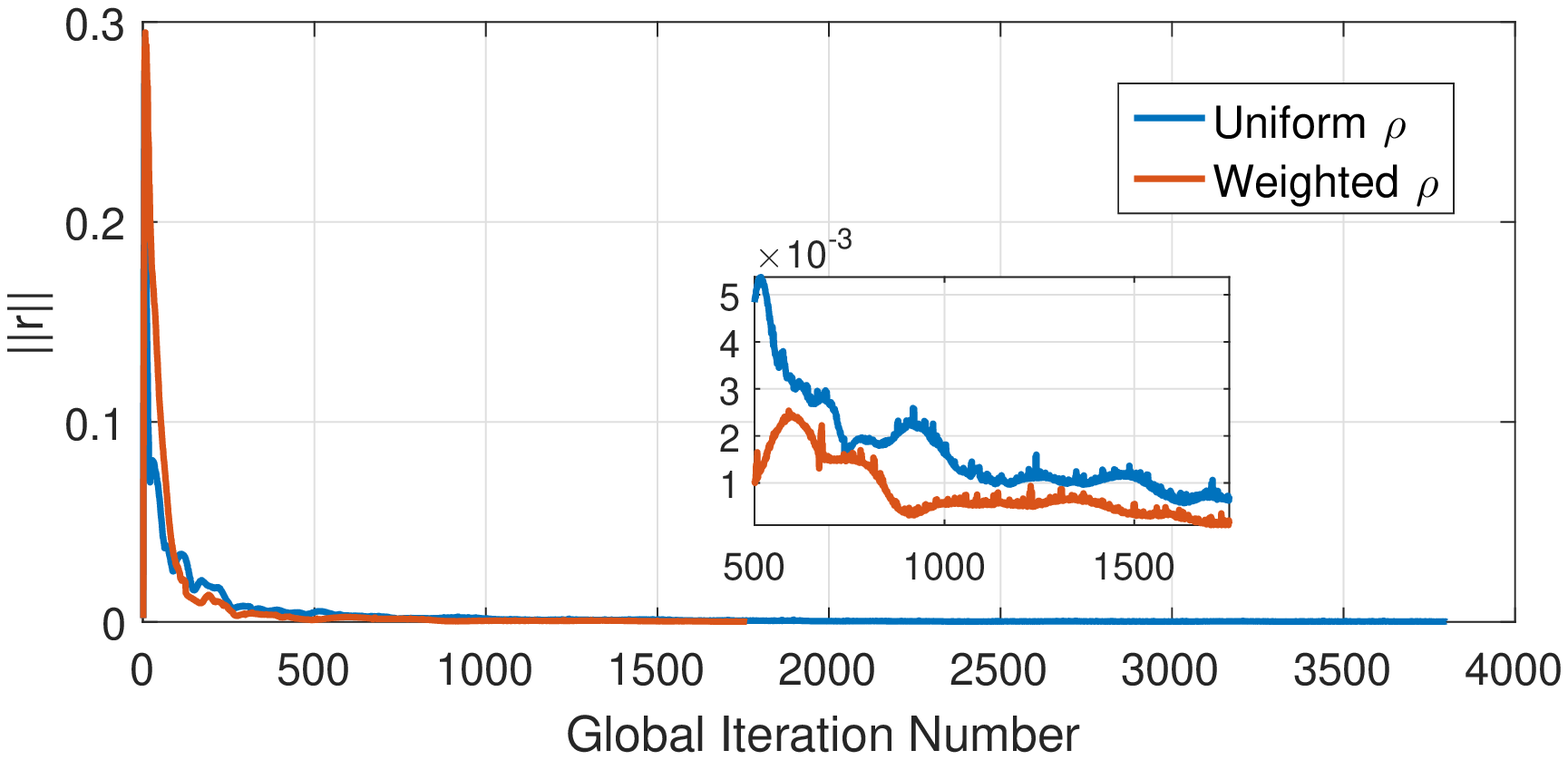}}\label{fig:57bus}
  \caption{Convergence of the scheduled-asynchronous algorithm for
    various test cases: (a) six bus test case
    in~\cite{AJW-BW:96}, (b) IEEE 14, (c) IEEE 30, and (d) IEEE
    57.}\label{fig:algorithm-1-buses}
\end{figure*}

Figure~\ref{fig:algorithm-1-buses} shows the convergence of
Algorithm~\ref{algo:pd} for the various test cases.  The stopping
criteria is $\gamma_{i}<10^{-4}$ for all $i\in\N$.  The horizontal
axis in the plots is the global iteration number, not the iteration
per bus (the number of iterations per bus is roughly the global
iteration number divided by the diameter of the acyclic orientation).
\begin{table}[h]
  \begin{center}
    \caption{Number of iterations needed for $\gamma_{i}<10^{-4}$,
      $\forall i\in\N$.}
    \begin{tabular}{ || c| c | c| c | c || }
      \hline
      & Iter./bus & \begin{tabular}{@{}c@{}}Iter./bus \\pack. drop.\end{tabular} & \begin{tabular}{@{}c@{}}Iter./bus \\ weighted $\rho$\end{tabular} & $\rho_0$  \\
      \hline
      6 bus  & 62 & 65 & 50 & 700 \\ \hline
      14 bus  & 110 & 127 & 57   &  700 \\ \hline
      30 bus   & 140 & 260 & 82  &  700 \\ \hline
      57 bus   & 1520 & 1810 & 660  & 1000 \\ \hline
    \end{tabular}
    \label{tab:sim}
  \end{center}
\end{table}
Table~\ref{tab:sim} and Figure~\ref{fig:algorithm-1-buses} show that
the weighted selection of $\rho$, discussed in
Remark~\ref{rem:para_selection}, leads to a much faster convergence
than the uniform $\rho_{ik} = \rho_0$, for all $\{i,k\}\in\E$. We have
also simulated the case with unreliable communication, where every
link has a $10\%$ probability of packet drop if the previous
communication was successful. Table~\ref{tab:sim} shows that the
Algorithm~\ref{algo:pd} still converges, albeit requiring more
iterations than the case with no packet
drops. Figure~\ref{fig:algoritm-1-drops} illustrates the transient
behavior for the packet drop case.

\begin{figure*}
  \centering
  \subfigure[IEEE 14 bus test case]{
    \includegraphics[width=.49\linewidth]{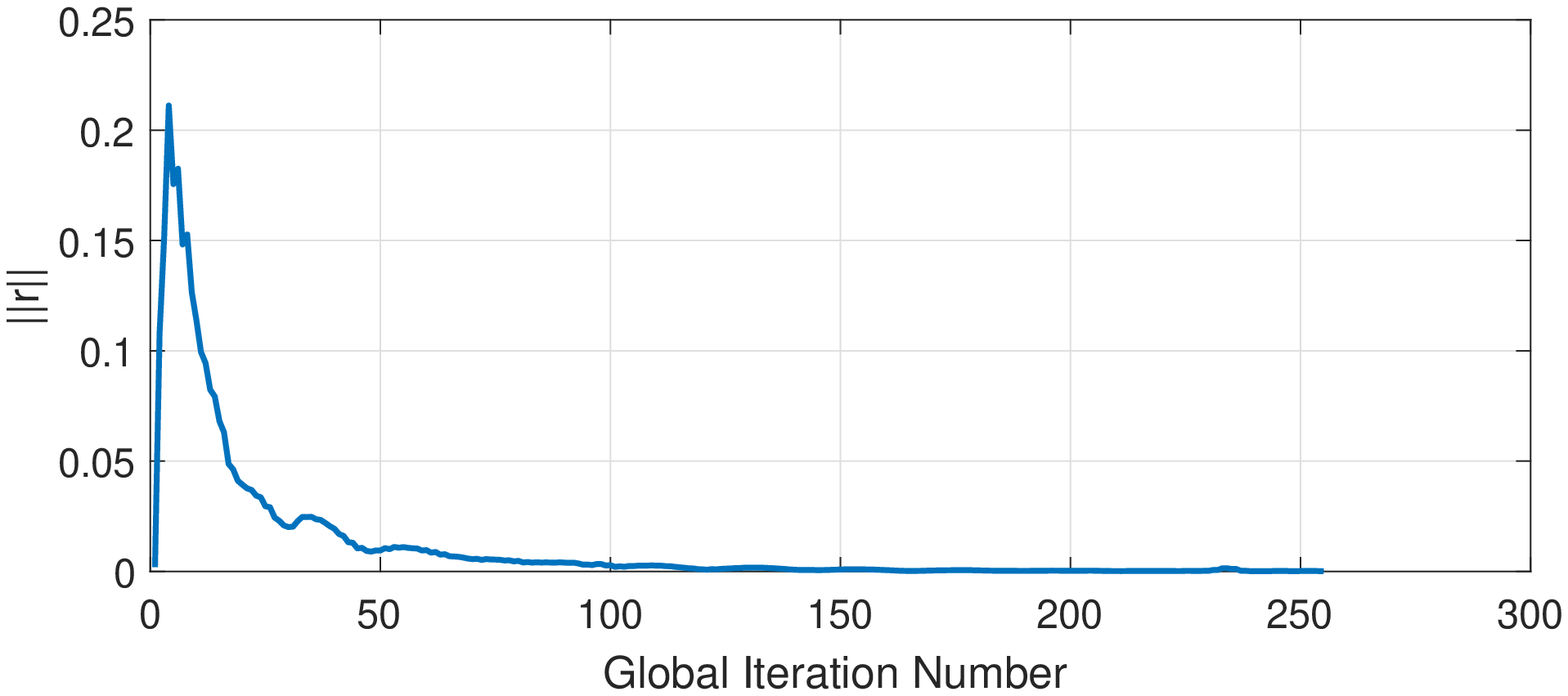}}\label{fig:14bus_drop}
  \subfigure[IEEE 30 bus test case]{
    \includegraphics[width=.49\linewidth]{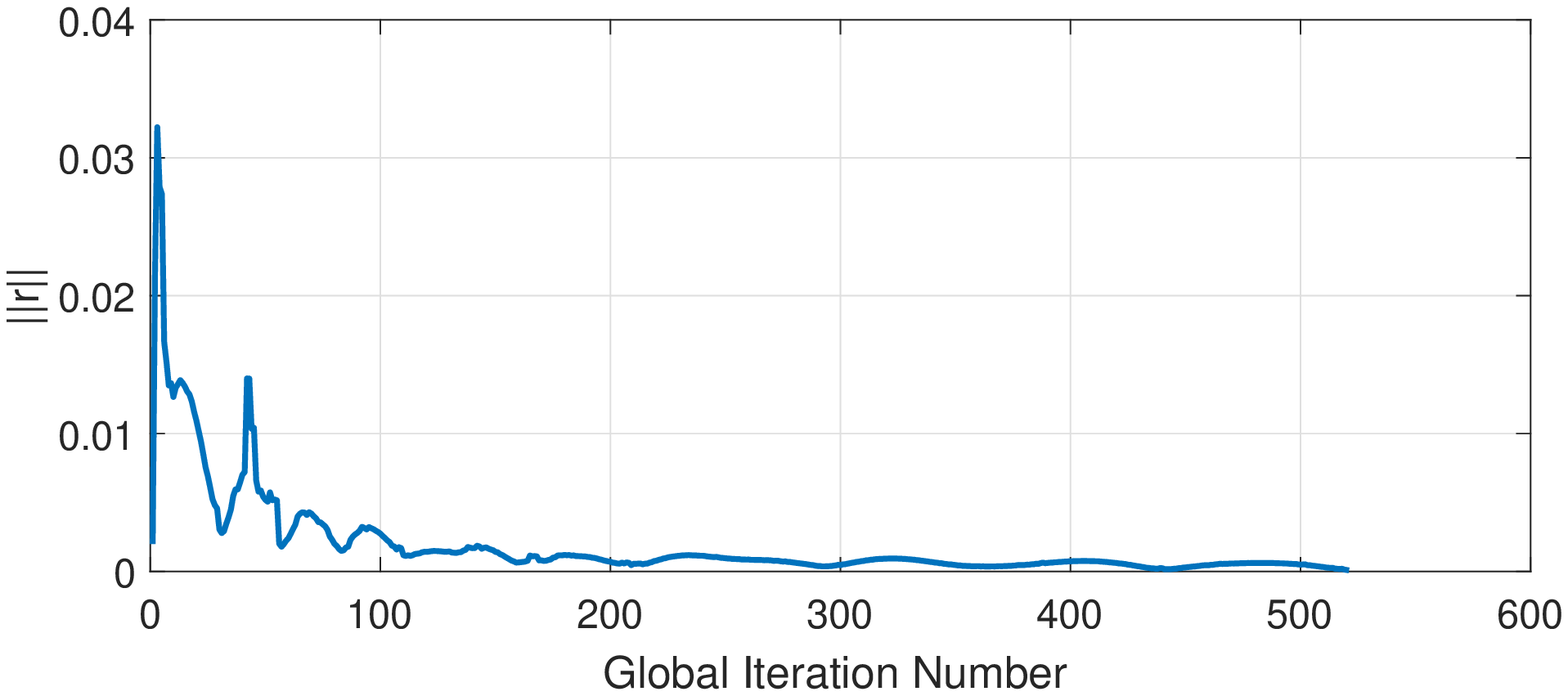}}\label{fig:30bus_drop}
  \caption{Convergence of the scheduled-asynchronous algorithm under
    packet drops.}\label{fig:algoritm-1-drops}
\end{figure*}

Finally, we evaluate the impact of the acyclic orientation in the
algorithm convergence in Figure~\ref{fig:algorithm-1-acyclic}.  We
simulate how Algorithm~\ref{algo:pd} converges for the IEEE 14 and 30
bus test cases with an arbitrarily chosen acyclic orientation
instead of the one obtained from the execution of
Algorithms~\ref{algo:degree}-\ref{algo:diameter}. Compared with
Figure~\ref{fig:algorithm-1-buses}(b)-(c), one can observe that the
algorithm requires many more iterations to converge and that $\|r\|$
increases dramatically at several iterations. These abrupt changes can
also be explained as the result of several long paths in the
digraph. In fact, recall that every node in a path makes one iteration
before transferring its update to the following node.  Starting with
one of the terminal nodes in a long path, the update propagates
through many nodes before reaching the other terminal node for its
next update, at which point it may introduce a big change on its
decision variables resulting in a dramatic change on
$\|r\|$. Therefore, the selection of an acyclic orientation with small
diameter has the added benefit, beyond reducing the average waiting
time per iteration, of resulting in less abrupt changes in the
algorithm execution.
	
\begin{figure*}
  \centering
  \subfigure[IEEE 14 bus test case]{
    \includegraphics[width=.49\linewidth]{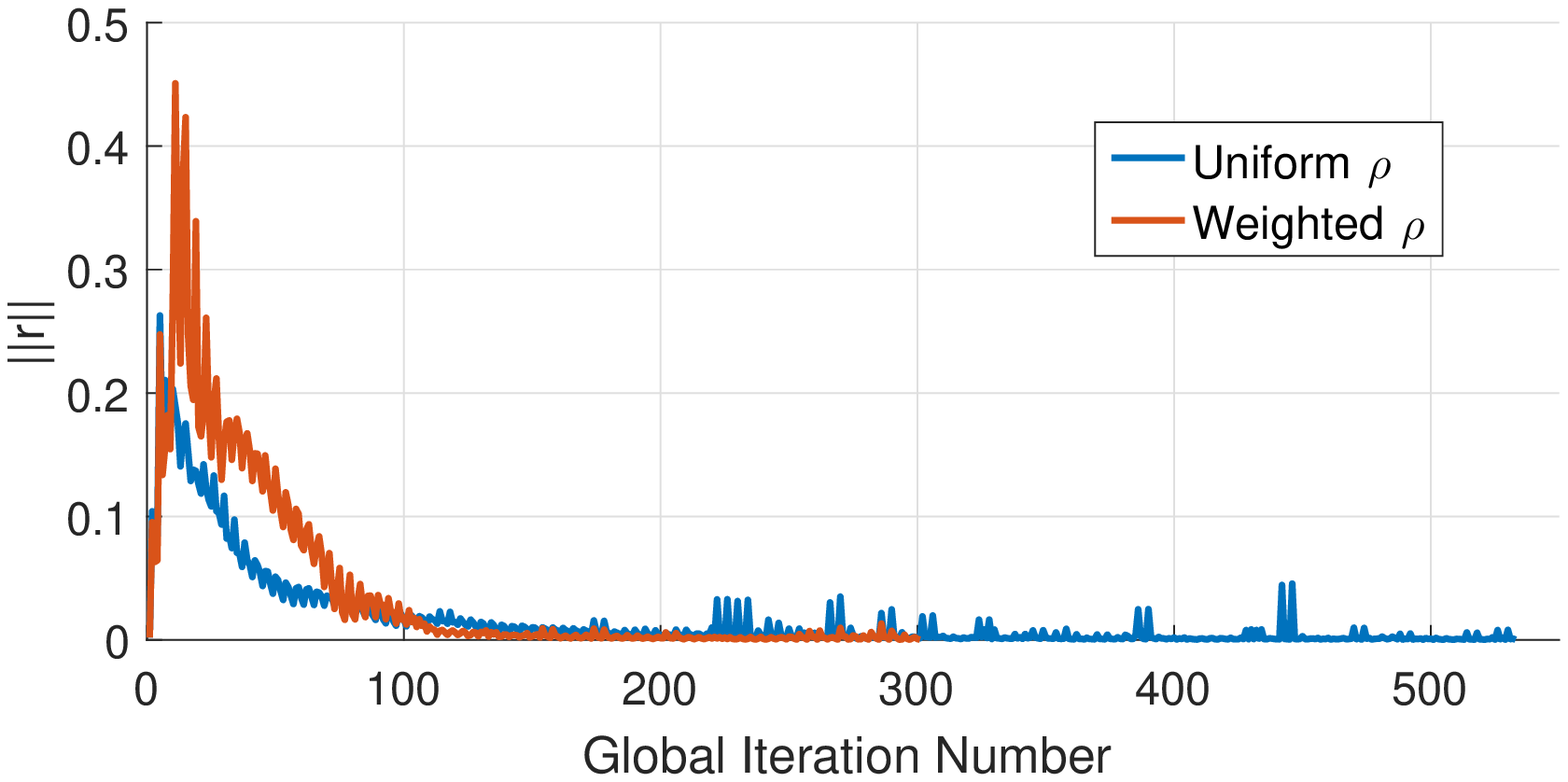}}\label{fig:14bus_wo_ori} 
  \subfigure[IEEE 30 bus test case]{
    \includegraphics[width=.49\linewidth]{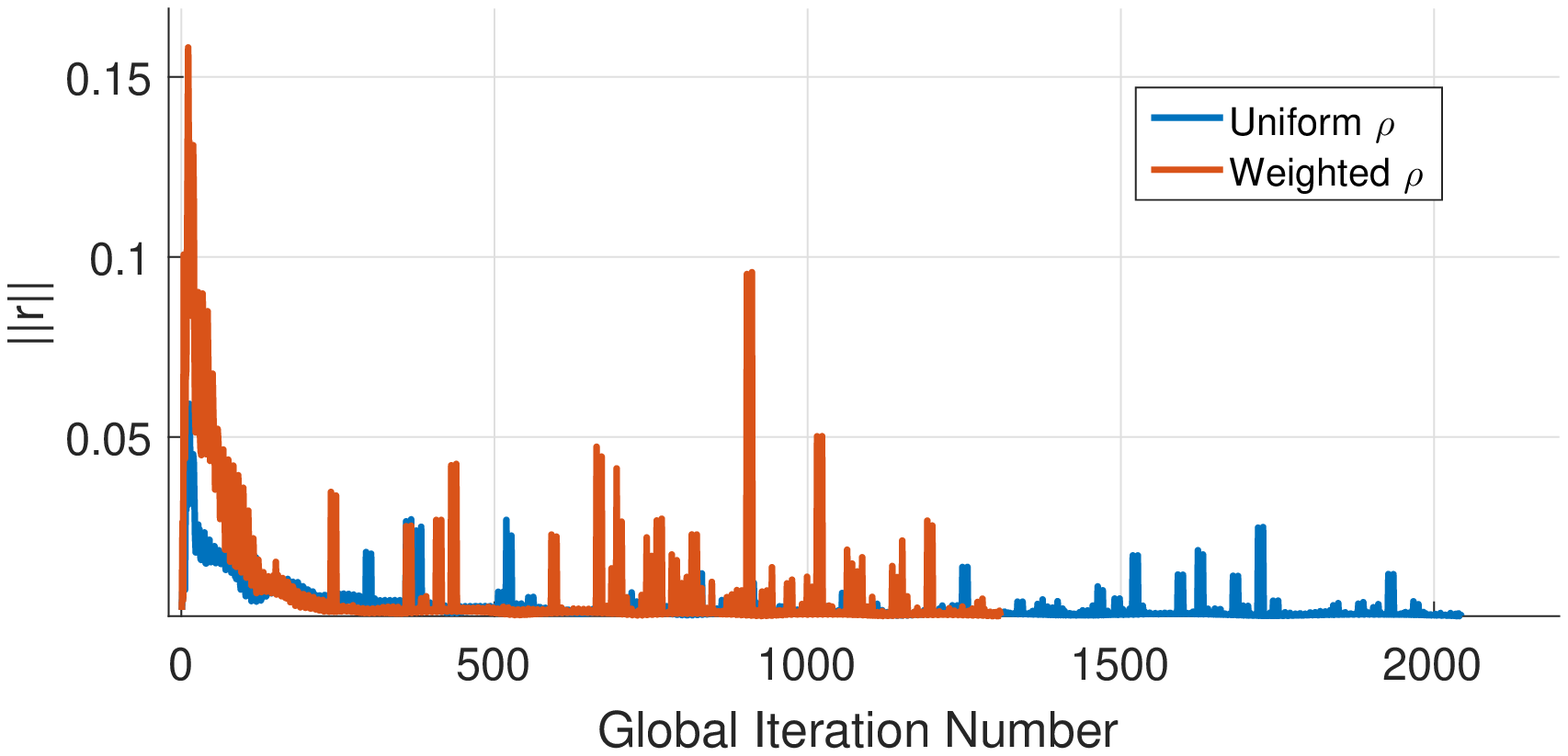}}\label{fig:30bus_wo_ori}
  \caption{Convergence of the scheduled-asynchronous algorithm with
    arbitrary acyclic orientations of the network
    graph.}\label{fig:algorithm-1-acyclic}
\end{figure*}

% To test the algorithm robustness, we simulate the operation scenario
% in which, if one node waits for more than a certain threshold time
% to hear from its neighbors, it precedes with its optimization by
% using the state of the previous step of neighboring nodes. The $5\%$
% and $10\%$ of unsuccessful line communications are simulated. The
% algorithm remains convergent at the expense of an increase in the
% number of iterations per node. This increase in the number of
% iterations can be considered the price to pay to bring the variables
% back to optimality under this type of disturbance.

\section{Conclusions}\label{sec:conclusion}
We have designed the scheduled-asynchronous algorithm to solve SDP
convexified OPF problems in a distributed way.  Under the proposed
strategy, every pair of nodes connected in the electrical network
update their local variables in an alternating fashion and the
ordering of node updates is encoded by an orientation of the network.
We have established the algorithm convergence to the optimizer when
the orientation is acyclic and shown how, when the network is
bipartite, the strategy corresponds to the ADMM scheme and has
therefore $O(1/n)$ convergence rate.  The convergence result does not
require strong convexity of the cost function, which makes it
especially suitable in OPF applications. To improve the algorithm
convergence rate, we have introduced a distributed graph coloring
algorithm that finds an acyclic orientation with small diameter for
networks with geometric degree distribution.  Future work will explore
the characterization of the convergence rate for general network
topologies, the optimal selection of the algorithm parameters, and the
formal analysis of the algorithm robustness properties observed in
simulation.
	
\section*{Acknowledgments}
This research was supported by the ARPA-e Network Optimized
Distributed Energy Systems (NODES) program, Cooperative Agreement
DE-AR0000695.

\bibliographystyle{IEEEtran}
\bibliography{JC,Main-add,Main,alias}

% Generated by IEEEtran.bst, version: 1.14 (2015/08/26)
\begin{thebibliography}{10}
\providecommand{\url}[1]{#1}
\csname url@samestyle\endcsname
\providecommand{\newblock}{\relax}
\providecommand{\bibinfo}[2]{#2}
\providecommand{\BIBentrySTDinterwordspacing}{\spaceskip=0pt\relax}
\providecommand{\BIBentryALTinterwordstretchfactor}{4}
\providecommand{\BIBentryALTinterwordspacing}{\spaceskip=\fontdimen2\font plus
\BIBentryALTinterwordstretchfactor\fontdimen3\font minus
  \fontdimen4\font\relax}
\providecommand{\BIBforeignlanguage}[2]{{%
\expandafter\ifx\csname l@#1\endcsname\relax
\typeout{** WARNING: IEEEtran.bst: No hyphenation pattern has been}%
\typeout{** loaded for the language `#1'. Using the pattern for}%
\typeout{** the default language instead.}%
\else
\language=\csname l@#1\endcsname
\fi
#2}}
\providecommand{\BIBdecl}{\relax}
\BIBdecl

\bibitem{CYC-JC-SM:17-acc}
C.-Y. Chang, J.~Cort\'es, and S.~Mart{\'\i}nez, ``A scheduled-asynchronous
  distributed optimization algorithm for the optimal power flow problem,''
  Seattle, WA, May 2017, to appear.

\bibitem{JM-ME-RA:99}
J.~Momoh, M.~El-Hawary, and R.~Adapa, ``A review of selected optimal power flow
  literature to 1993. {P}art {I}: Nonlinear and quadratic programming
  approaches,'' vol.~14, no.~1, pp. 96--104, 1999.

\bibitem{VA-CC:92}
V.~Ajjarapu and C.~Christy, ``The continuation power flow: a tool for steady
  state voltage stability analysis,'' vol.~7, no.~1, pp. 416--423, 1992.

\bibitem{MA:02}
M.~Abido, ``Optimal power flow using particle swarm optimization,''
  \emph{International Journal of Electrical Power \& Energy Systems}, vol.~24,
  no.~7, pp. 563--571, 2002.

\bibitem{AJC-JAA:98}
A.~J. Conejo and J.~A. Aguado, ``Multi-area coordinated decentralized {DC}
  optimal power flow,'' vol.~13, no.~4, pp. 1272--1278, 1998.

\bibitem{PNB-AGB-NIM-NKP:05}
P.~N. Biskas, A.~G. Bakirtzis, N.~I. Macheras, and N.~K. Pasialis, ``A
  decentralized implementation of {DC} optimal power flow on a network of
  computers,'' vol.~20, no.~1, pp. 25--33, 2005.

\bibitem{JL-SL:12}
J.~Lavaei and S.~H. Low, ``Zero duality gap in optimal power flow problem,''
  vol.~27, no.~1, pp. 92--107, 2012.

\bibitem{BZ:13}
B.~Zhang, ``Control and optimization of power systems with renewables: Voltage
  regulation and generator dispatch,'' Ph.D. dissertation, University of
  California, Berkeley, 2013.

\bibitem{DKM:13}
D.~K. Molzahn, ``Application of semidefinite optimization techniques to
  problems in electric power systems,'' Ph.D. dissertation, University of
  Wisconsin-Madison, 2013.

\bibitem{DKM-CJ-IH-PP:17}
D.~K. Molzahn, C.~Josz, I.~A. Hiskens, and P.~Panciatici, ``A {L}aplacian-based
  approach for finding near globally optimal solutions to {OPF} problems,''
  vol.~32, no.~1, pp. 305--315, 2017.

\bibitem{RM-MA-JL:14}
R.~Madani, M.~Ashraphijuo, and J.~Lavaei, ``Promises of conic relaxation for
  contingency-constrained optimal power flow problem,'' 2014, pp. 1064--1071.

\bibitem{PB-AB-NM-NP:05}
P.~Biskas, A.~Bakirtzis, N.~Macheras, and N.~Pasialis, ``A decentralized
  implementation of {DC} optimal power flow on a network of computers,''
  vol.~20, no.~1, pp. 25--33, 2005.

\bibitem{AL-BZ-ND:12}
A.~Lam, B.~Zhang, and N.~T. David, ``Distributed algorithms for optimal power
  flow problem,'' 2012, pp. 430--437.

\bibitem{ED-HZ-GBG:13}
E.~Dall'Anese, H.~Zhu, and G.~B. Giannakis, ``Distributed optimal power flow
  for smart microgrids,'' vol.~4, no.~3, pp. 1464--1475, 2013.

\bibitem{DJ-JX-JMM:14}
D.~Jakoveti{\'c}, J.~Xavier, and J.~M. Moura, ``Fast distributed gradient
  methods,'' vol.~59, no.~5, pp. 1131--1146, 2014.

\bibitem{BH-XY:15}
B.~He and X.~Yuan, ``On non-ergodic convergence rate of {D}ouglas--{R}achford
  alternating direction method of multipliers,'' \emph{Numerische Mathematik},
  vol. 130, no.~3, pp. 567--577, 2015.

\bibitem{SL-AN:16}
S.~Lee and A.~Nedi{\'c}, ``Asynchronous gossip-based random projection
  algorithms over networks,'' vol.~61, no.~4, pp. 953--968, 2016.

\bibitem{LM-AS-FT:13}
L.~Mazzarella, A.~Sarlette, and F.~Ticozzi, ``A new perspective on gossip
  iterations: from symmetrization to quantum consensus,'' Florence, Italy,
  2013.

\bibitem{DS:09}
D.~Shah, ``Gossip algorithms,'' \emph{Foundations and Trends in Networking},
  vol.~3, no.~1, pp. 1--125, 2009.

\bibitem{FB-JC-SM:08cor}
F.~Bullo, J.~Cort{\'e}s, and S.~Mart{\'\i}nez, \emph{Distributed Control of
  Robotic Networks}, ser. Applied Mathematics Series.\hskip 1em plus 0.5em
  minus 0.4em\relax Princeton University Press, 2009, electronically available
  at \url{http://coordinationbook.info}.

\bibitem{SB-LV:09}
S.~Boyd and L.~Vandenberghe, \emph{Convex Optimization}.\hskip 1em plus 0.5em
  minus 0.4em\relax Cambridge University Press, 2009.

\bibitem{SB-DG-SL-KC:11}
S.~Bose, D.~F. Gayme, S.~H. Low, and K.~M. Chandy, ``Optimal power flow over
  tree networks,'' 2011, pp. 1342--1348.

\bibitem{CYC-WZ:16}
C.-Y. Chang and W.~Zhang, ``On near and exact optimal power flow solutions for
  microgrid applications,'' Las Vegas, NV, 2016.

\bibitem{SB-SHL-TT-BH:15}
S.~Bose, S.~H. Low, T.~Teeraratkul, and B.~Hassibi, ``Equivalent relaxations of
  optimal power flow,'' vol.~60, no.~3, pp. 729--742, 2015.

\bibitem{CYC-WZ:16-2}
C.-Y. Chang and W.~Zhang, ``General opf problems with reactive power costs: A
  distributed sdp approach,'' \emph{arXiv preprint arXiv:1612.04508}, 2016.

\bibitem{SB-NP-EC-BP-JE:11}
S.~Boyd, N.~Parikh, E.~Chu, B.~Peleato, and J.~Eckstein, ``Distributed
  optimization and statistical learning via the alternating direction method of
  multipliers,'' \emph{Foundations and Trends in Machine Learning}, vol.~3,
  no.~1, 2011.

\bibitem{RD:79}
R.~Deming, ``Acyclic orientations of a graph and chromatic and independence
  numbers,'' \emph{Journal of Combinatorial Theory, Series B}, vol.~26, no.~1,
  pp. 101--110, 1979.

\bibitem{RF-VB-NM-CS:08}
R.~Figueiredo, V.~Barbosa, N.~Maculan, and C.~D. Souza, ``Acyclic orientations
  with path constraints,'' \emph{RAIRO-Operations Research}, vol.~42, no.~4,
  pp. 455--467, 2008.

\bibitem{AS:89}
A.~S{\'a}nchez-Arroyo, ``Determining the total colouring number is {NP}-hard,''
  \emph{Discrete Mathematics}, vol.~78, no.~3, pp. 315--319, 1989.

\bibitem{DC-BG:73}
D.~Corneil and B.~Graham, ``An algorithm for determining the chromatic number
  of a graph,'' vol.~2, no.~4, pp. 311--318, 1973.

\bibitem{NR-DS-PS-RT:97}
N.~Robertson, D.~Sanders, P.~Seymour, and R.~Thomas, ``The four-colour
  theorem,'' \emph{Journal of Combinatorial Theory, Series B}, vol.~70, no.~1,
  pp. 2--44, 1997.

\bibitem{NR-DS-PS-RT-96}
------, ``Efficiently four-coloring planar graphs,'' in \emph{Proceedings of
  the twenty-eighth annual {ACM} symposium on theory of computing}, 1996, pp.
  571--575.

\bibitem{KCS:15}
K.~C. Sou, ``A branch-decomposition approach to power network design,'' 2016,
  pp. 6483--6488.

\bibitem{SG-MHK:93}
S.~Ghosh and M.~H. Karaata, ``A self-stabilizing algorithm for coloring planar
  graphs,'' \emph{Distributed Computing}, vol.~7, no.~1, pp. 55--59, 1993.

\bibitem{PH-SB-ECS-CB:10}
P.~Hines, S.~Blumsack, E.~C. Sanchez, and C.~Barrows, ``The topological and
  electrical structure of power grids,'' in \emph{43rd Hawaii International
  Conference on System Sciences (HICSS)}, 2010, pp. 1--10.

\bibitem{RA-IA-GLN:04}
R.~Albert, I.~Albert, and G.~L. Nakarado, ``Structural vulnerability of the
  north american power grid,'' \emph{Physical review E}, vol.~69, no.~2, p.
  025103, 2004.

\bibitem{WD-WL-XC-QAW:11}
W.~Deng, W.~Li, X.~Cai, and Q.~A. Wang, ``The exponential degree distribution
  in complex networks: Non-equilibrium network theory, numerical simulation and
  empirical data,'' \emph{Physica A: Statistical Mechanics and its
  Applications}, vol. 390, no.~8, pp. 1481--1485, 2011.

\bibitem{AJW-BW:96}
A.~J. Wood and B.~Wollenberg, \emph{Power generation operation and control-2nd
  edition}.\hskip 1em plus 0.5em minus 0.4em\relax New York, USA: John Wiley
  and Sons, 1996, vol.~3.

\end{thebibliography}

\appendix
\renewcommand{\theequation}{A.\arabic{equation}}
\renewcommand{\thetheorem}{A.\arabic{theorem}}

\section{Appendices}
\subsection{Auxiliary Results for the Proof of
  Theorem~\ref{thm:converge}}
We gather here various auxiliary results used in the proof of
Theorem~\ref{thm:converge}.  Our first result shows that two classes
of convex optimization problems with separable cost functions have the
same optimal solution.

\begin{lemma}\label{lem:equi_opt}
  Let $\phi, \varphi:\mathcal{H}^{n} \rightarrow \mathbb{R}$ be convex
  and differentiable, and let $\X$ be a convex set. Then
  \begin{subequations}\label{eq:eq_opts}
    \begin{alignat}{2}\label{eq:eq_opts1}
      &X^\star \in \argmin_{X\in\X}\phi(X) + \varphi(X),\\
      \label{eq:eq_opts2} \hspace{-5mm}\Leftrightarrow \; &X^\star \in
      \argmin_{X\in\X}\phi(X) + \langle \bigtriangledown
      \varphi(X^\star), X \rangle.
    \end{alignat}
  \end{subequations}
\end{lemma}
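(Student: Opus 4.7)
The plan is to reduce both optimizations to the same first-order variational inequality at $X^\star$ and conclude they characterize the same set of minimizers. Since $\X$ is convex and both $\phi + \varphi$ and $\phi + \langle \bigtriangledown\varphi(X^\star),\cdot\rangle$ are convex and differentiable in $X$, each minimization admits the standard first-order characterization: $X^\star \in \X$ is optimal if and only if
\[
\langle \bigtriangledown g(X^\star),\, X - X^\star \rangle \ge 0, \quad \forall X \in \X,
\]
where $g$ is the respective objective.

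First I would apply this characterization to \eqref{eq:eq_opts1}. The gradient of $\phi + \varphi$ at $X^\star$ is $\bigtriangledown\phi(X^\star) + \bigtriangledown\varphi(X^\star)$, so optimality reads
\[
\langle \bigtriangledown\phi(X^\star) + \bigtriangledown\varphi(X^\star),\, X - X^\star \rangle \ge 0, \quad \forall X \in \X.
\]
Next I would apply the characterization to \eqref{eq:eq_opts2}. Here $\varphi$ has been replaced by its linearization at $X^\star$, which is itself convex (actually affine) and differentiable, with gradient equal to the constant $\bigtriangledown\varphi(X^\star)$. Therefore the gradient of the objective of \eqref{eq:eq_opts2} at $X^\star$ is again $\bigtriangledown\phi(X^\star) + \bigtriangledown\varphi(X^\star)$, and optimality yields exactly the same variational inequality. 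Since in both cases the inequality is necessary and sufficient for $X^\star$ to be a minimizer, the two conditions are logically equivalent, establishing the chain of implications in both directions.

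The argument is short and relies on no deep machinery; the only subtlety to be careful about is ensuring the first-order condition is both necessary and sufficient, which requires the convexity of $\X$ and the convexity of each objective (the linearized one being affine, hence trivially convex). I anticipate no real obstacle — the main point to make clearly is that the gradient of the linear functional $X \mapsto \langle \bigtriangledown\varphi(X^\star), X\rangle$ is the constant $\bigtriangledown\varphi(X^\star)$, so that the two variational inequalities coincide identically.
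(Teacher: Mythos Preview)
Your proposal is correct and follows essentially the same approach as the paper: both reduce the two optimality statements to the identical first-order variational inequality $\langle \bigtriangledown\phi(X^\star) + \bigtriangledown\varphi(X^\star),\, X - X^\star \rangle \ge 0$ for all $X\in\X$, using that this condition is necessary and sufficient by convexity. Your write-up is slightly more explicit about why the gradient of the linearized term is the constant $\bigtriangledown\varphi(X^\star)$, but the argument is the same.
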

\begin{proof}
  The necessary and sufficient condition for $X^\star$ being the
  optimal solution of optimization~\eqref{eq:eq_opts1} is that
  \begin{align*}
    &\big\langle \bigtriangledown \big( \phi(X^\star) +
    \varphi(X^\star)\big),X-X^\star \big\rangle \geq 0,
  \end{align*}  
  for all $X \in\X$. We can rewrite the condition above as 
  \begin{align}\label{eq:sep_opt_cond}
    & \big\langle \bigtriangledown \phi(X^\star),X-X^\star \big\rangle
    +\big\langle \bigtriangledown
    \varphi(X^\star),X-X^\star\big\rangle \geq 0,
  \end{align}
  for all $X \in\X$. Eq.~\eqref{eq:sep_opt_cond} is also the
  optimality condition of optimization~Eq.~\eqref{eq:eq_opts2}, which
  completes the proof.
\end{proof}

We use Lemma~\ref{lem:equi_opt} to establish 
% The interchangeable optimality conditions of the two optimizations
% are useful for the proof of Lemma~\ref{lem:pre_proof}.
% Lemmas~\ref{lem:pre_proof} through~\ref{lem:invariant_set} are
% particularly established for the proof of
% Theorem~\ref{thm:converge}. All the lemmas assume that conditions
% 1)-4) in Theorem~\ref{thm:converge} hold. Lemma~\ref{lem:pre_proof}
two inequalities that will be employed to show that the
function~\eqref{eq:LaSalle} is non-increasing along the algorithm
executions. The notation we employ next is carried over from the proof
of Theorem~\ref{thm:converge}.

\begin{lemma}\label{lem:pre_proof}
  Under the assumptions of Theorem~\ref{thm:converge}, the following
  inequalities hold
  \begin{subequations}
    \label{eq:pre_proof}
    \begin{alignat}{2}
      & \sum_{i=1}^N \Big( f_i(W^\star_i) - f_i(W_i^{n+1}) \Big) \leq
      {p^{\star}}^\top r^{n+1}, \label{eq:prove_1} \displaybreak[0]
      \\
      & \sum_{i=1}^N \Big( f_i(W_i^{n+1}) - f_i(W^\star_i) \Big) \leq
      -{p^{n+1}}^\top r^{n+1} \label{eq:prove_2}
      \\
      \nonumber &+ \hspace{-2mm}\sum_{\{i,k\}\in\hat{\E}}\hspace{-2mm}
      \rho_{ik} D_{ki}^\top(W_k^{n} - W_k^{n+1})D_{ik}(W_i^\star -
      W_i^{n+1}) .
    \end{alignat}
  \end{subequations}
\end{lemma}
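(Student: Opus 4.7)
\textbf{Proof proposal for Lemma~\ref{lem:pre_proof}.}

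The plan is to prove the two inequalities by different but complementary tools: \eqref{eq:prove_1} follows from the global saddle-point structure of the Lagrangian at the primal-dual optimum, while \eqref{eq:prove_2} follows from the local first-order optimality of each $W_i^{n+1}$ in the subproblem~\eqref{eq:local_update}, carefully combined with the dual update rule~\eqref{eq:lagran_update}.

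For~\eqref{eq:prove_1}, I would consider the Lagrangian
\[
  L(W_{\N},p) = \sum_{i\in\N} f_i(W_i) + \sum_{\{i,k\}\in\hat{\E}} p_{ik}^\top G_{ik}(W_i,W_k),
\]
associated with problem~\eqref{eq:OPF_algo}. Under convexity, feasibility and Slater's condition (assumptions~1--3 of Theorem~\ref{thm:converge}), strong duality holds and $(W_{\N}^\star,p^\star)$ is a saddle point, so $L(W_{\N}^\star,p^\star)\le L(W_{\N}^{n+1},p^\star)$ for every $W_{\N}^{n+1}\in\W$. Using $G_{ik}(W_i^\star,W_k^\star)=0$ on the left-hand side and $G_{ik}(W_i^{n+1},W_k^{n+1})=r_{ik}^{n+1}$ on the right-hand side yields~\eqref{eq:prove_1} directly.

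For~\eqref{eq:prove_2}, I would rewrite each local subproblem~\eqref{eq:local_update} in the completed-square form used in~\eqref{eq:iter_sink}, namely $W_i^{n+1} = \argmin_{W_i\in\W_i} f_i(W_i) + \sum_{\{i,k\}\in\hat\E}\frac{\rho_{ik}}{2}\|G_{ik}(W_i,W_k^n)+p_{ik}^n/\rho_{ik}\|^2 + \sum_{\{k,i\}\in\hat\E}\frac{\rho_{ik}}{2}\|G_{ki}(W_k^{n+1},W_i)+p_{ik}^n/\rho_{ik}\|^2$. The first-order optimality condition at $W_i^{n+1}$ (using Lemma~\ref{lem:equi_opt} to handle the nonsmooth cost and the linearity of the $D_{\cdot\cdot}$ maps) gives, for every $W_i\in\W_i$,
\[
  \langle \bigtriangledown f_i(W_i^{n+1}), W_i-W_i^{n+1}\rangle + (\text{gradient of dual+quadratic part})^\top(W_i - W_i^{n+1})\ge 0.
\]
The key algebraic manipulation is to use $\rho_{ik}G_{ik}(W_i^{n+1},W_k^n)+p_{ik}^n = p_{ik}^{n+1} + \rho_{ik}D_{ki}(W_k^n - W_k^{n+1})$ for tail edges $\{i,k\}\in\hat\E$, and $\rho_{ik}G_{ki}(W_k^{n+1},W_i^{n+1})+p_{ik}^n = p_{ik}^{n+1}$ for head edges, both of which follow from the dual update~\eqref{eq:lagran_update}. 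Specializing the inequality to $W_i=W_i^\star$ and summing over $i\in\N$, I would pair the contributions of the two endpoints of each edge: for every $\{i,k\}\in\hat\E$, the two $p_{ik}^{n+1}$ terms combine into $(p_{ik}^{n+1})^\top G_{ik}(W_i^\star-W_i^{n+1},W_k^\star-W_k^{n+1}) = -(p_{ik}^{n+1})^\top r_{ik}^{n+1}$ (using linearity of $D_{ik},D_{ki}$ and $G_{ik}(W_i^\star,W_k^\star)=0$), while the extra $\rho_{ik}D_{ki}(W_k^n-W_k^{n+1})$ term survives only on the tail-edge side. Convexity of $f_i$ then converts $\langle\bigtriangledown f_i(W_i^{n+1}),W_i^\star-W_i^{n+1}\rangle$ into an upper bound on $f_i(W_i^\star)-f_i(W_i^{n+1})$, and rearranging yields~\eqref{eq:prove_2}.

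The main obstacle is the edge bookkeeping in the second inequality: each edge $\{i,k\}\in\hat\E$ contributes to the optimality conditions of both $i$ (as tail, using the outdated $W_k^n$) and $k$ (as head, using the freshly updated $W_k^{n+1}$ via $W_i^{n+1}$), and it is precisely the mismatch between $W_k^n$ and $W_k^{n+1}$ that produces the residual cross-term $\rho_{ik}D_{ki}^\top(W_k^n - W_k^{n+1})D_{ik}(W_i^\star-W_i^{n+1})$ on the right-hand side of~\eqref{eq:prove_2}. Tracking this mismatch carefully, and verifying that the remaining $p_{ik}^{n+1}$ contributions collapse to $-(p^{n+1})^\top r^{n+1}$ using $G_{ik}(W^\star_i,W^\star_k)=0$, is the delicate part of the argument; once this is done, the two inequalities follow as described.
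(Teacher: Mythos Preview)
Your proposal is correct and follows essentially the same route as the paper: for~\eqref{eq:prove_1} both you and the paper use the saddle-point property of $(W_{\N}^\star,p^\star)$, and for~\eqref{eq:prove_2} both exploit local optimality of $W_i^{n+1}$ together with the identity $p_{ik}^n+\rho_{ik}G_{ik}(W_i^{n+1},W_k^n)=p_{ik}^{n+1}+\rho_{ik}D_{ki}(W_k^n-W_k^{n+1})$ and then sum over edges. The only cosmetic difference is that the paper invokes Lemma~\ref{lem:equi_opt} directly (so $f_i$ appears rather than $\nabla f_i$, and no separate convexity step is needed), whereas you write the first-order condition and then use convexity of $f_i$; note that your phrase ``upper bound on $f_i(W_i^\star)-f_i(W_i^{n+1})$'' should read \emph{lower} bound, and the $f_i$ here are smooth, so the reference to a ``nonsmooth cost'' is unnecessary.
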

\begin{proof}
  \textbf{Eq.~(\ref{eq:prove_1}).}  Since \textbf{(P2)} is convex and
  Slater's condition holds, strong duality follows and the KKT
  conditions are necessary and sufficient for the optimal solution
  of~\eqref{eq:OPF_algo}. Strong duality together with the KKT
  conditions imply
  \begin{align}\label{eq:opt}
    W_{\N}^\star = \underset{W_i\in\W_i}{\argmin}\sum_{i\in\N}
    f_i(W_i)+ \hspace{-2.5mm}
    \sum_{\{l,k\}\in\hat{\E}}\hspace{-2.5mm}{p_{lk}^{\star}}^\top
    G_{lk}(W_{l},W_k).
  \end{align}
  Since $W_{\N}^\star$ is the optimizer, using this inequality we
  deduce
  \begin{align*}%\label{eq:prove_ineq}
    &\sum_{i=1}^{N} f_i(W^\star_i)+\hspace{-3mm}
    \sum_{\{l,k\}\in\hat{\E}}{p_{lk}^{\star}}^\top r_{lk}^\star
    \\ \nonumber
    &\hspace{14mm}\leq \sum_{i=1}^{N} f_i(W_i^{n+1}) +
    \sum_{\{l,k\}\in\hat{\E}}{p_{lk}^{\star}}^\top r_{lk}^{n+1}.
    % \\
    % &\Longrightarrow \sum_{i\in\N} f_i(W^\star_i) - \sum_{i\in\N}
    % f_i(W_i^{n+1}) \leq (p^{\star})^\top r^{n+1}.
  \end{align*}
  Inequality~\eqref{eq:prove_1} follows by noting that $r_{lk}^\star =
  0$, for all $\{l,k\}\in\hat{\E}$.
  
  \textbf{Eq.~(\ref{eq:prove_2}).}  We start by
  rewriting~\eqref{eq:local_update} with the number of iterations
  instead of the time index,
  \begin{align}\label{eq:local_update_iter}
    &W_i^{n+1} = \argmin_{W_i\in\W_i}f_i(W_i)
    \\
    \nonumber &\;\;+\hspace{-2mm}\sum_{\{i,k\}\in
      \hat{\E}}\hspace{-2mm}\Big( {p_{ik}^n}^\top G_{ik}(W_i,W_k^n) +
    \frac{\rho_{ik}}{2}\|G_{ik}(W_i,W_k^n)\|^2 \Big) \\ \nonumber
    &\;\;+\hspace{-2mm}\sum_{\{k,i\}\in \hat{\E}}\hspace{-2mm}\Big(
    {p_{ik}^n}^\top G_{ki}(W_k^{n+1},W_i) +
    \frac{\rho_{ik}}{2}\|G_{ki}(W_k^{n+1},W_i)\|^2 \Big).
  \end{align}
  The superscript of every variable in~\eqref{eq:local_update_iter}
  represents the number of updates of the associated variable.  We
  resort to Lemma~\ref{lem:equi_opt} to
  rewrite~\eqref{eq:local_update_iter}, viewing $f_i$ as $\phi$ and
  grouping all the other objective functions as $\varphi$. In
  addition, $W_i^{n+1}$ and $W_i$ play the role of $X^\star$ and $X$
  in~\eqref{eq:eq_opts}, respectively. We then have
  \begin{align*}%\label{eq:update_i_eq}
    &W_i^{n+1} = \argmin_{W_i\in\W_i}f_i(W_i)
    \\
    \nonumber &\;\;+ \sum_{\{i,k\}\in \hat{\E}}\hspace{-1mm}\Big(
    {p_{ik}^{n}}^\top\hspace{-1mm}+ \rho_{ik}
    G_{ik}^\top(W_i^{n+1},W_k^{n})\Big) D_{ik}(W_i)
    \\
    \nonumber &\;\;+ \sum_{\{k,i\}\in \hat{\E}}\hspace{-1mm}\Big(
    {p_{ik}^{n}}^\top\hspace{-1mm}+ \rho_{ik}
    G_{ki}^\top(W_k^{n+1},W_i^{n+1})\Big) D_{ik}(W_i).
  \end{align*}
  With a slight abuse of notation, we denote by
  $G_{ki}^\top(\cdot,\cdot) \equiv G_{ki}(\cdot,\cdot)^\top$, the
  transpose of $G_{ki}(\cdot,\cdot)$ (similarly for the variables
  $D_{ki}^\top$).
  % \margins{just checking: you changed the notation of
  % $G_{ik}(W_i,W_k^n)$ by $G_{ik}^\top(W_i,W_k^n)$. Since
  % $G_{ik}^\top(W_i,W_k^n)$ gives you a matrix, I think we should
  % use the notation $G_{ik}(W_i,W_k^n)^\top$ (propagate this
  % change) } \margins{is there a $-W_i^{n+1}$ missing in the
  % previous equation (multiplying
  % ${\bigtriangledown}_{i}G_{ik}$)?}\marginch{Yes, I
  % dropped it on purpose because it doesn't change the
  % optimal solution.}
  % where ${\bigtriangledown}_{W_i}
  % G_{ik}:\mathcal{H}^{|\N_i|}\mapsto \mathbb{R}^4$
  % denotes the gradient operator.
  % \margins{we had a discussion about this already, and you say it
  % is not going to affect the solution, but can we mention that
  % we have dropped the terms $W_i^{n+1}$ and why not including
  % these terms does not matter?(from what it would be the
  % equivalent to equation A.25b)? the computations will be easier
  % to reproduce by a reader this way. Else, we can could directly
  % change equation (A.25b) into what we really use
  % here.}  %\marginch{I changed the statement of Lemma A1}
  According to this equation, evaluating the objective function at
  $W_i^\star$ gives rise to a larger value than at $% W_{i}\neq
  W_{i}^{n+1}$, and therefore,
  % is the optimizer in into the cost function of the
  % equation above gives a bigger value than the one from
  % $W_{i}^{n+1}$.
  % \margins{why don't we recall that $p_{ik}^{n+1} =
  % p_{ik}^n + \rho_{ik}G_{ki}(W_k^{n+1},W_i^{n+1}) $?
  % or refer to an equation for the update of $p$?}
  % \marginch{added after the next equation}
  \begin{align}
    &\label{eq:update_eq} \sum_{i=1}^N f_i(W_i^{n+1}) - f_i(W^\star_i)
    \leq \sum_{i=1}^N\bigg(\\ \nonumber &\sum_{\{i,k\}\in
      \hat{\E}}\hspace{-2.5mm}\Big( {p_{ik}^{n}}^\top\hspace{-2.5mm} +
    \rho_{ik} G_{ik}^\top(W_i^{n+1},W_k^{n})\Big)D_{ik}( W_i^\star -
    W_i^{n+1}) \\ \nonumber &+\hspace{-3mm}\sum_{\{k,i\}\in
      \hat{\E}}\hspace{-2.5mm} {p_{ik}^{n+1}}^\top D_{ik}( W_i^\star -
    W_i^{n+1})\hspace{-1mm}\bigg).
  \end{align}
  Note that we used~\eqref{eq:lagran_update} to substitute $p_{ik}^n +
  \rho_{ik}G_{ki}(W_k^{n+1}\hspace{-2mm},W_i^{n+1})$ by
  $p_{ik}^{n+1}$.  The term ${p_{ik}^{n}}^\top\hspace{-2.5mm} +
  \rho_{ik}G_{ik}^\top(W_i^{n+1},W_k^{n})$ in~\eqref{eq:update_eq} can
  be written as
  \begin{align*}
    &{p_{ik}^{n}}^\top\hspace{-2.5mm} + \rho_{ik}
    G_{ik}^\top(W_i^{n+1},W_k^{n}) \\
    = &{p_{ik}^{n}}^\top\hspace{-2.5mm} + \rho_{ik}\Big(
    D_{ik}^\top(W_i^{n+1}) + D_{ki}^\top(W_k^{n})\Big)
    \\
    = &{p_{ik}^{n}}^\top\hspace{-2.5mm} + \rho_{ik}\Big(
    D_{ik}^\top(W_i^{n+1})\hspace{-1mm} + D_{ki}^\top(W_k^{n+1}) +
    D_{ki}^\top(W_k^{n} -W_k^{n+1} )\Big)
    \\
    = &{p_{ik}^{n}}^\top\hspace{-2.5mm} + \rho_{ik}\Big(
    G_{ik}^\top(W_i^{n+1}\hspace{-1mm},W_k^{n+1}) +
    D_{ki}^\top(W_k^{n} -W_k^{n+1} )\Big)
    \\
    = &{p_{ik}^{n+1}}^\top\hspace{-2.5mm} + \rho_{ik}
    D_{ki}^\top(W_k^{n}-W_k^{n+1}).
  \end{align*}
  Using this equation in~\eqref{eq:update_eq} and the fact that
  $G_{ik}(W_i^\star,W_k^\star)=0$, we obtain~\eqref{eq:prove_2}.
\end{proof}

\end{document}